\documentclass{amsart}
\usepackage{amsmath, amsthm, amssymb, amsfonts}
\usepackage{bm}
\usepackage{dsfont}
\usepackage[utf8]{inputenc}
\usepackage{rotate}
\usepackage{tikz}
\usepackage{tikz-cd}
\usepackage[arrow,matrix,curve]{xy} 	
\usepackage{xcolor}

\theoremstyle{plain}
\newtheorem{theorem}{Theorem}[section]

\newtheorem{lemma}[theorem]{Lemma}

\theoremstyle{definition}
\newtheorem{definition}[theorem]{Definition}

\newtheorem{remark}[theorem]{Remark}

\newcommand{\norm}[1]{\left\lVert#1\right\rVert}

\newcommand{\vol}{{\rm{vol}}}

\newcommand{\RR}{{\mathbb{R}}}
\newcommand{\ZZ}{{\mathbb{Z}}}
\newcommand{\NN}{{\mathbb{N}}}
\newcommand{\QQ}{{\mathbb{Q}}}

\title{Systems of Rank One, Explicit Rokhlin Towers, and Covering Numbers}
\date{\today}

\author{Christian Wei\ss{}}
\address{{\bf{Ruhr West University of Applied Sciences,}}\\ {{Department of Natural Sciences, Duisburger Str. 100,}}\\{{45479 M\"ulheim an der Ruhr, Germany}}}
\email{christian.weiss@hs-ruhrwest.de}

\begin{document}

\maketitle

\begin{abstract} Rotations $f_\alpha$ of the one-dimensional torus (equipped with the normalized Lebesgue measure) by an irrational angle $\alpha$ are known to be dynamical systems of rank one. This is equivalent to the property that the covering number $F^*(f_\alpha)$ of the dynamical system is one. In other words, there exists a basis $B$ such that for arbitrarily high $h$ an arbitrarily large proportion of the unit torus can be covered by the Rokhlin tower $(f_\alpha^kB)_{k=0}^{h-1}$. Although $B$ can be chosen with diameter smaller than any fixed $\varepsilon > 0$, it is not always possible to take an interval for $B$ but this can only be done when the partial quotients of $\alpha$ are unbounded. In the present paper, we ask what maximum proportion of the torus can be covered when $B$ is the union of $n_B \in \mathbb{N}$ disjoint intervals. This question has been answered in the case $n_B =1$ by Checkhova, and here we address the general situation. If $n_B = 2$ we give a precise formula for the maximum proportion. Furthermore, we show that for fixed $\alpha$ the maximum proportion converges to $1$ when $n_B \to \infty$. Explicit lower bounds can be given if $\alpha$ has constant partial quotients. Our approach is inspired by the construction involved in the proof of the Rokhlin Lemma and furthermore makes use of the Three Gap Theorem. 
\end{abstract}

\section{Introduction}

If a measure-theoretic dynamical system $(X,T,\mu)$, with $\mu$ being a probability invariant by $T$, is invertible and ergodic, then for arbitrary $\varepsilon > 0$ and $h \in \mathbb{N}$, it is always possible to find a measurable set $B$ such that $B,TB,\ldots,TB^{h-1}$ are disjoint sets and have joint measure greater than $1-\varepsilon$. This result is known as the famous Rokhlin lemma. Its proof is constructive, and hence it allows for finding an explicit set $B$ satisfying the mentioned properties, see Section~\ref{sec:proofs} for more details. The finite sequence of sets $(T^kB)_{k=0}^{h-1}$ is called a \textbf{Rokhlin tower} and $B$ is its \textbf{basis}. In this paper, we exclusively consider a  special class of transformations, namely rotations $f_\alpha: x \mapsto x + \alpha \mod 1$ by an irrational angle $\alpha$ of the unit torus $\mathbb{T}_1 = [0,1)$ with ends of the interval glued, and aim to contribute to a better understanding of Rokhlin towers when an extra condition on the topological structure of the basis $B$ is imposed. The map $f_\alpha$ even yields a uniquely ergodic measure-theoretic dynamical system on the torus where $\mu$ is the Lebesgue-measure. The properties of $f_\alpha$ and also of the structure of its Rokhlin towers has been comprehensively studied, see e.g. \cite{BCF99}, \cite{Che00}, \cite{Fer79}, \cite{Fer97}, \cite{GHL12}, \cite{HK02} to name only a few references.\\[12pt]
Here, we ask how to find \textit{nice} sets $B$ for $f_\alpha$ given $\varepsilon > 0, h \in \mathbb{N}$. In \cite{Che00}, a method based on the Three Gap Theorem (Theorem~\ref{thm:3gap}) was introduced how to calculate the minimal admissible value $\varepsilon$ for $f_\alpha$, if we choose $B$ as an interval, see Theorem~\ref{thm:checkhova}. Amongst others necessary and sufficient conditions on $\alpha$ have been described when $\varepsilon = 0$ can be achieved for arbitrary high $h$. In this paper, we address basis sets consisting of a finite union of intervals and analyze differences and similarities in comparison to the case of only one interval.\\[12pt]
Before we come to the presentation of our results, we mention at first that the dynamical system on $\mathbb{T}_1$ defined by $f_\alpha$ is not only uniquely ergodic (for the Lebesgue measure) but even a so-called system of rank one, see \cite{Fer97}. That means that it does not only satisfy the properties stated in the Rokhlin lemma, but approximates every partition arbitrarily well by Rokhlin towers, which means that the diameter of $B$ can be chosen arbitrarily small. As we have already indicated, this does still not guarantee that $B$ can always be chosen as an interval. For the formal definition of a system of rank one we recall at first that the distance between two partitions $P=\left\{P_1,P_2,\ldots,P_r\right\}, P'=\left\{P_1',P_2',\ldots,P_r'\right\}$ of $X$ is defined by $|P-P'| = \sum \mu(P_i \Delta P_i')$. 
\begin{definition} \label{defi:rank_one} A measure-theoretic dynamical system $(X,T,\mu)$ is of \textbf{rank one} if for any partition $P$ of $X$ and any $\varepsilon > 0$ there exists a subset $B \subset X$, a positive \textbf{height} $h$ and a partition $P'$ of $X$ such that
	\begin{itemize}
		\item $B, TB, \ldots T^{h-1}B$ are disjoint
		\item $|P-P'| < \varepsilon$
		\item $P'$ is refined by the partition made of the sets $B,TB,\ldots,T^{h-1}B$ and $X \setminus \cup_{k=0}^{h-1} T^kB$.
	\end{itemize}
\end{definition}
 In fact, the map $f_\alpha$ has rank one because it is an ergodic transformation of the compact group $\RR / \ZZ$, compare \cite{Vee84}. The notion of covering numbers (see \cite{Kin88}, \cite{Che00}) is closely linked to the definition of systems of rank one, and gives a quantitative measure of how close $(X,T,\mu)$ is to being of rank one. 
\begin{definition} \label{def:covering_number_general} The \textbf{covering number} $F^*(T)$ of $(X,T,\mu)$ is the supremum of all $z \in \mathbb{R}$ such that for every partition $P = \left\{P_1,\ldots,P_r \right\}$ of $X$ and for every $\varepsilon > 0$ and every $h_0 \in \mathbb{N}$, there exists a subset $B \subset X$ an integer $h \geq h_0$ and a partition $P' = \left\{P_1',\ldots,P_r' \right\}$ of $X$ such that we have
	\begin{itemize}
		\item[(i)] $B, TB, \ldots, T^{h-1}B$ are disjoint,
		\item[(ii)] $\mu(\cup_{k=0}^{h-1}T^kB) \geq z$,
		\item[(iii)] $\sum_{i=1}^r \mu ((P_i \Delta P_i') \cap (\cup_{k=0}^{h-1}T^kB)) < \varepsilon$,
		\item[(iv)] each $P_i'\cap (\cup_{i=0}^{h-1}T^iB)$ is a union of sets $T^jB$, for some $0 \leq j \leq h-1$.		
	\end{itemize}
\end{definition}
The fact that $f_\alpha$ is a system of rank one for any $\alpha \in \mathbb{R} \setminus \mathbb{Q}$ is thus equivalent to $F^*(f_\alpha) = 1$. We call an arc of the torus which is closed on the left and open on the right an \textbf{interval}. If we can approximate any partition, in the sense of Definition~\ref{defi:rank_one}, by a tower whose basis $B$ is an interval, we say that $(X,T,\mu)$ is of \textbf{rank one by intervals}. This idea can be transferred to  Definition~\ref{def:covering_number_general} by restricting covering number to unions of $n_B \in \mathbb{N}$ disjoint intervals and considering $n_B = 1$.
\begin{definition} Let $F_n(T)$ be the supremum of all $z \in \mathbb{R}$ such that for every $h_0 \in \mathbb{N}$, there exists $h \geq h_0$ and a set $B$ consisting of $n_B = n$ disjoint intervals with
	\begin{itemize}
		\item[(i)] $B, TB, \ldots, T^{h-1}B$ are disjoint
		\item[(ii)] $\mu(\cup_{i=0}^{h-1}T^iB) \geq z$.
	\end{itemize}
\end{definition}
Note that $F_1(T) = 1$ means for a transformation $T$ that it is rank one by intervals. For $f_\alpha$, the property $F_1(f_\alpha)=1$, depends on the continued fraction of $\alpha$. Therefore, we briefly fix notation and summarize some of the important properties of continued fractions. For more details, we refer the reader e.g. to \cite{BS96,Nie92}. Let $[a_0;a_1;a_2;\ldots]$ with \textbf{partial quotients} $a_i \in \mathbb{N}_0$ be the continued fraction expansion of $\alpha \in \mathbb{R}$ and denote the corresponding sequence of \textbf{convergents} by $(p_n/q_n)_{n \in \mathbb{N}_0}$. Recall that
\begin{align*} 
p_{-2} = 0, p_{-1} = 1, p_n = a_np_{n-1} + p_{n-1}, n \geq 0
\end{align*}
\begin{align*} 
q_{-2} = 1, q_{-1} = 0, q_n = a_nq_{n-1} + q_{n-1}, n \geq 0
\end{align*} 
\begin{theorem} \label{thm:f1} Let $\alpha \in \RR \setminus \QQ$. Then $F_1^*(f_\alpha) = 1$ if and only if $\alpha$ has unbounded partial quotients.
\end{theorem}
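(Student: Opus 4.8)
The plan is to reduce the statement to an elementary estimate on the continued fraction of $\alpha$, using the fact that for a rotation the best interval cover of a given height is controlled by the smallest return distance. Write $\norm{x}$ for the distance from $x\in\RR$ to the nearest integer and, for $h\ge 2$, set $g_h:=\min_{1\le m\le h-1}\norm{m\alpha}$. If $B=[x,x+\ell)$ is an interval of length $\ell$, then $f_\alpha^j B$ is the arc $[x+j\alpha,x+j\alpha+\ell)$ reduced mod $1$, so $B,f_\alpha B,\ldots,f_\alpha^{h-1}B$ are pairwise disjoint if and only if $\norm{(k-j)\alpha}\ge\ell$ for all $0\le j<k\le h-1$, i.e.\ if and only if $\ell\le g_h$; and in that case $\mu\big(\bigcup_{j=0}^{h-1}f_\alpha^j B\big)=h\ell$. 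Hence the largest measure an interval tower of height $h$ can cover is $h g_h$ (this is essentially the content of Theorem~\ref{thm:checkhova}), and therefore
\[
F_1(f_\alpha)\;=\;\limsup_{h\to\infty} h\, g_h .
\]

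The second step is to locate the maxima of $h\mapsto h g_h$. By the best approximation property of the convergents — equivalently, by reading off the smallest gap in the Three Gap Theorem (Theorem~\ref{thm:3gap}) — one has $\norm{q_k\alpha}=\min_{1\le m<q_{k+1}}\norm{m\alpha}$, so that $g_h=\eta_k$ for every $h$ with $q_k<h\le q_{k+1}$, where $\eta_k:=\norm{q_k\alpha}=|q_k\alpha-p_k|$. On each such block $h g_h=h\eta_k$ is maximal at $h=q_{k+1}$, whence
\[
F_1(f_\alpha)\;=\;\limsup_{k\to\infty} q_{k+1}\,\eta_k .
\]
From $p_{k+1}q_k-p_k q_{k+1}=(-1)^k$ together with $\eta_{k+1}<\eta_k$ one obtains the identity $q_{k+1}\eta_k+q_k\eta_{k+1}=1$, i.e.\ $q_{k+1}\eta_k=1-q_k\eta_{k+1}<1$, as well as the standard bounds $\tfrac{1}{q_{k+2}+q_{k+1}}<\eta_{k+1}<\tfrac{1}{q_{k+2}}$. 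Thus the whole question reduces to controlling the size of $q_k\eta_{k+1}$.

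Suppose first that $\alpha$ has unbounded partial quotients, and choose $n_j\to\infty$ with $a_{n_j}\to\infty$. Taking $k=n_j-2$ and using $q_{k+2}=a_{n_j}q_{k+1}+q_k>a_{n_j}q_{k+1}$ gives $q_k\eta_{k+1}<q_k/q_{k+2}<1/a_{n_j}\to 0$, so $q_{k+1}\eta_k=1-q_k\eta_{k+1}\to 1$; combined with $q_{k+1}\eta_k<1$ for all $k$ this yields $F_1(f_\alpha)=1$. Suppose conversely that $a_i\le M$ for all $i$. Then, for $k$ large, $q_{k+1}=a_{k+1}q_k+q_{k-1}<(M+1)q_k$ and $q_{k+2}+q_{k+1}=(a_{k+2}+1)q_{k+1}+q_k<(M+2)q_{k+1}<(M+1)(M+2)q_k$, hence $q_k\eta_{k+1}>q_k/(q_{k+2}+q_{k+1})>\big((M+1)(M+2)\big)^{-1}$. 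Consequently $q_{k+1}\eta_k<1-\big((M+1)(M+2)\big)^{-1}$ for all large $k$, so $F_1(f_\alpha)\le 1-\big((M+1)(M+2)\big)^{-1}<1$. This proves both implications.

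The step I expect to need the most care is the first reduction: verifying that the translates of an interval of length $\ell$ are disjoint exactly when $\ell\le g_h$, and that $h\mapsto g_h$ is constant equal to $\eta_k$ on each block $(q_k,q_{k+1}]$. This is precisely where the Three Gap Theorem (or, equivalently, the classical best approximation theorem) enters; once it is available the remaining continued fraction estimates are routine, and the explicit constant $\big((M+1)(M+2)\big)^{-1}$ is certainly not optimal.
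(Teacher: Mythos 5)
Your argument is correct. Note, though, that the paper does not prove Theorem~\ref{thm:f1} internally: it defers to \cite{Vee84} and to the Three Gap Theorem proof in \cite{Che00}, and in effect reads the statement off from Checkhova's formula $F_1(f_\alpha)=\limsup_{n} q_{n+1}\lvert q_n\alpha-p_n\rvert$ (Theorem~\ref{thm:checkhova}; the first ``$\lim$'' there should be a $\limsup$, and your formulation is the correct one). What you have done is supply a self-contained derivation of exactly the needed special case: the reduction $F_1(f_\alpha)=\limsup_h h\,g_h$ via the elementary disjointness criterion for translated arcs, the identification $g_h=\norm{q_k\alpha}$ on $(q_k,q_{k+1}]$ via the best-approximation property (a weaker input than the full Three Gap Theorem the paper leans on), and then the identity $q_{k+1}\norm{q_k\alpha}+q_k\norm{q_{k+1}\alpha}=1$ to settle both implications, with an explicit bound $F_1(f_\alpha)\le 1-\bigl((M+1)(M+2)\bigr)^{-1}$ in the bounded case. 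This buys a shorter, citation-free proof of the qualitative dichotomy, at the cost of not recovering the exact value of $F_1(f_\alpha)$ (e.g.\ the sharp constant $\tfrac{5+\sqrt5}{10}$ for the golden mean) that Checkhova's formula provides and that the rest of the paper uses. One small justification slip: the identity $q_{k+1}\norm{q_k\alpha}+q_k\norm{q_{k+1}\alpha}=1$ does not follow from $p_{k+1}q_k-p_kq_{k+1}=(-1)^k$ together with $\norm{q_{k+1}\alpha}<\norm{q_k\alpha}$ alone; you need that $q_k\alpha-p_k$ alternates in sign (or, alternatively, the bound $\norm{q_k\alpha}<1/q_{k+1}$, which rules out the same-sign case since then the difference would have modulus strictly less than $1$). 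This is standard and easily patched, so it does not affect the validity of the proof.
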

The result was already mentioned in \cite{Vee84}. A complete proof using Three Gap Theorem is given in \cite{Che00}. For $\alpha \in \mathbb{R} \setminus \mathbb{Q}$ with continued fraction expansion $[a_0;a_1;a_2;\ldots]$ and convergents $p_n,q_n$ a basis of the Rokhlin towers (depending on $\varepsilon$) is given by
	$$B_n = \left[ q_n \left|\frac{p_n}{q_n} - \alpha \right|, \frac{1}{q_n} - q_n \left|\frac{p_n}{q_n} - \alpha \right| \right),$$
compare \cite{Fer97}, Theorem 6. Note that $B_n$ is well-defined for all $n$ if only if $\alpha$ has unbounded partial quotients. In general, the covering number of a base set consisting of $n_B=1$ interval can be precisely calculated by the following theorem.
\begin{theorem}[Checkhova, \cite{Che00}] \label{thm:checkhova} Let $\alpha \in \mathbb{R} \setminus \mathbb{Q}$ have continued fraction expansion $[a_0;a_1,a_2,\ldots]$ with convergents $(p_n,q_n)$ and define $v_n = [0;a_n,a_{n-1},\ldots,a_1]$ and $t_n = [0;a_{n+1},a_{n+2},\ldots]$. Then
$$F_1(f_\alpha) = \lim_{n \to \infty} q_{n+1} |q_n \alpha - p_n| = \limsup_{n \to \infty} \frac{1}{1+t_nv_n}.$$
 Hence, $F_1(f_\alpha) = 1$ if and only if $\alpha$ has unbounded partial quotients and $F_1(f_\alpha) \geq \frac{1}{5 + \sqrt{5}}{10}$ for all $\alpha \in \mathbb{R} \setminus \mathbb{Q}$. The minimal value of $F_1(f_\alpha)$ is attained for any $\alpha \in [0,1) \cap \mathbb{Z}\varphi + \mathbb{Z}$, where $\varphi = \frac{1+\sqrt{5}}{2}$ is the golden mean.
\end{theorem}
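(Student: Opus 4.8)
The plan is to turn the evaluation of $F_1(f_\alpha)$ into an arithmetic extremal problem in the height $h$ and then to solve it with the Three Gap Theorem. Because $f_\alpha$ is a rotation, an interval $B$ of length $\ell$ produces a valid Rokhlin tower $B,f_\alpha B,\ldots,f_\alpha^{h-1}B$ if and only if $B$ is disjoint from each translate $B+k\alpha$ on $\mathbb{T}_1$ for $1\le k\le h-1$; since $B$ and $B+k\alpha$ are arcs of the same length $\ell$, they are disjoint precisely when $\ell\le\lVert k\alpha\rVert$, where $\lVert\cdot\rVert$ is the distance to the nearest integer (the half-open convention for arcs makes the boundary case admissible, and $\lVert\alpha\rVert\le 1/2$ guarantees that an arc of the required length exists). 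Hence the largest height-$h$ tower with an interval base has base length $g(h):=\min_{1\le k\le h-1}\lVert k\alpha\rVert$ and covers measure $h\,g(h)$, so the definition of $F_1$ gives
\[ F_1(f_\alpha)=\limsup_{h\to\infty}h\,g(h). \]

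Next I would evaluate $g(h)$: by the Three Gap Theorem applied to $0,\alpha,\ldots,N\alpha$ the shorter of the two gaps adjacent to $0$ has length $\lVert q_n\alpha\rVert=|q_n\alpha-p_n|$ whenever $q_n\le N<q_{n+1}$, which is equivalently the best-approximation property $\min_{1\le k\le N}\lVert k\alpha\rVert=\lVert q_n\alpha\rVert$ on that range. Thus $g(h)=|q_n\alpha-p_n|$ for $q_n<h\le q_{n+1}$, so on each such block $h\,g(h)$ increases and attains its maximum $q_{n+1}|q_n\alpha-p_n|$ at $h=q_{n+1}$; as these blocks exhaust all large $h$, this yields $F_1(f_\alpha)=\limsup_{n\to\infty}q_{n+1}|q_n\alpha-p_n|$ (so the plain limit in the statement is to be read as a limit superior). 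Writing $\alpha=(p_n\alpha_{n+1}+p_{n-1})/(q_n\alpha_{n+1}+q_{n-1})$ with $\alpha_{n+1}=a_{n+1}+t_{n+1}$ gives $|q_n\alpha-p_n|=(q_{n+1}+q_nt_{n+1})^{-1}$, hence
\[ q_{n+1}|q_n\alpha-p_n|=\frac{1}{1+(q_n/q_{n+1})\,t_{n+1}}=\frac{1}{1+v_{n+1}t_{n+1}}, \]
using the standard identity $q_n/q_{n+1}=[0;a_{n+1},a_n,\ldots,a_1]=v_{n+1}$; shifting the index turns this into $F_1(f_\alpha)=\limsup_n(1+v_nt_n)^{-1}$, the claimed formula.

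The consequences follow from this formula. Since $\tfrac{1}{a_n+1}\le v_n\le\tfrac{1}{a_n}$ and $\tfrac{1}{a_{n+1}+1}\le t_n\le\tfrac{1}{a_{n+1}}$, the product $v_nt_n$ tends to $0$ along a subsequence if and only if $(a_n)$ is unbounded, so $F_1(f_\alpha)=1$ exactly when $\alpha$ has unbounded partial quotients; and if $a_n=1$ for all large $n$ then $v_n,t_n\to\varphi-1$, whence $v_nt_n\to(\varphi-1)^2=1/\varphi^2$ and $F_1(f_\alpha)=(1+1/\varphi^2)^{-1}=\tfrac{5+\sqrt5}{10}$, so the minimum is attained there (for instance at $\alpha=\varphi-1$). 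What remains — and what I expect to be the main obstacle — is the universal lower bound $F_1(f_\alpha)\ge\tfrac{5+\sqrt5}{10}$, i.e. $\liminf_n v_nt_n\le 1/\varphi^2$ for every $\alpha$. This is a Borel/Hurwitz-type statement — no block of boundedly many consecutive indices can satisfy $v_nt_n>1/\varphi^2$ throughout — which one establishes by a finite case analysis of the admissible local patterns of small partial quotients, with the golden-mean class emerging as the limiting equality case. Everything else reduces to routine bookkeeping with the Three Gap Theorem and the continued-fraction recursions.
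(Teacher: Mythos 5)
The paper itself does not prove this theorem: it is quoted from Chekhova \cite{Che00}, whose method the paper describes as resting on the Three Gap Theorem. Your argument follows exactly that route, and the core of it is correct: disjointness of the equal-length half-open arcs $f_\alpha^k B$ is equivalent to $\ell\le\lVert k\alpha\rVert$ for $1\le k\le h-1$, so $F_1(f_\alpha)=\limsup_h h\,g(h)$ with $g(h)=\min_{1\le k\le h-1}\lVert k\alpha\rVert$; the best-approximation property gives $g(h)=\lVert q_n\alpha\rVert$ for $q_n<h\le q_{n+1}$, so the block maximum is $q_{n+1}\lVert q_n\alpha\rVert$; and the identities $\lVert q_n\alpha\rVert=(q_{n+1}+q_nt_{n+1})^{-1}$ and $q_n/q_{n+1}=v_{n+1}$ correctly convert this to $\limsup_n(1+v_nt_n)^{-1}$. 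Your reading of the stated ``$\lim$'' as a $\limsup$ is also the right one.

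The one genuine gap is the universal bound $F_1(f_\alpha)\ge\frac{5+\sqrt5}{10}$, which you only gesture at (``Borel/Hurwitz-type finite case analysis'') without carrying out; since this is the only nontrivial inequality left after your reduction, it must be included, and it does close in a few lines. Suppose $v_nt_n>\varphi^{-2}$ for all large $n$. Since $v_n<1$, also $t_n>\varphi^{-2}>1/3$, and $t_n<1/a_{n+1}$ forces $a_{n+1}\le 2$ for all large $n$. If additionally $a_{n+1}=2$ for some such $n$, then $t_n<1/2$ forces $v_n>2\varphi^{-2}>0.76$, hence $a_n=1$ and $v_{n-1}=v_n^{-1}-1<0.31$, hence $a_{n-1}\ge 3$, contradicting the previous sentence for large $n$. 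So $a_i=1$ eventually, whence $v_nt_n\to\varphi^{-2}$ and $F_1(f_\alpha)=\frac{5+\sqrt5}{10}$; if instead $v_nt_n\le\varphi^{-2}$ infinitely often, the bound follows directly. Separately, note that you verify attainment of the minimum only for $\alpha$ with partial quotients eventually equal to $1$ (e.g. $\varphi-1$), not the clause about every $\alpha\in[0,1)\cap(\mathbb{Z}\varphi+\mathbb{Z})$; that clause as printed is in any case problematic, since for instance $2\varphi-3=[0;\overline{4}]$ lies in this set but your formula gives $F_1=\frac{5+2\sqrt5}{10}$, so the correct equality class is the set of $\alpha$ whose expansion is eventually all ones.
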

If the basis does not only consist of one interval but is a union of several, we may without loss of generality assume that $B$ is (after rotation) of the form
\begin{align} \label{eq:defB}
B = \underbrace{[c_1=0,\beta_1)}_{=:B_1} \cup \underbrace{[c_2, c_2 + \beta_2)}_{=:B_2} \cup \ldots \underbrace{[c_{n_B}, c_{n_B} + \beta_{n_B})}_{=:B_{n_B}}
\end{align}
with $1 \geq c_i \geq c_{i-1} + \beta_{i-1}$ for all $2 \leq i \leq n_B$, and $c_i, \beta_i > 0$. As an analogue of Theorem~\ref{thm:checkhova}, we can precisely calculate the covering number if $n_B = 2$. 
\begin{theorem} \label{thm:F2} Let $\alpha \in \mathbb{R} \setminus \mathbb{Q}$ with continued fraction expansion $[a_0;a_1;a_2;\ldots]$.
\begin{itemize}
    \item[(i)] If $a_i = 1$ eventually, then $F_2(f_\alpha) = \frac{4\varphi+3}{10} = \frac{2\sqrt{5}+5}{10} > F_1(f_\alpha)$.
    \item[(ii)] If $a_i > 1$ eventually, then we have
    $$F_2(f_\alpha) = F_1(f_\alpha).$$
\end{itemize}
\end{theorem}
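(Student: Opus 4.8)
The plan is to reduce the computation of $F_2(f_\alpha)$ to a combinatorial optimization governed by the Three Gap Theorem, and then to evaluate the resulting $\limsup$ in the two regimes. By the normalization \eqref{eq:defB} we may take $B=[0,\beta_1)\cup[c,c+\beta_2)$. Writing out when the $h$ sets $f_\alpha^kB$, $0\le k\le h-1$, are pairwise disjoint and translating by $-k\alpha$, the conditions split into a ``self'' part and a ``cross'' part: the translates of $B_1$ (resp. of $B_2$) are pairwise disjoint exactly when $\beta_1$ (resp. $\beta_2$) is at most $\rho_h:=\min_{1\le m\le h-1}\|m\alpha\|$, the distance from $0$ to the nearest point of the orbit; and the $B_1$-translates are disjoint from the $B_2$-translates exactly when the $2h-1$ points $\{c+m\alpha:|m|\le h-1\}$ all avoid the arc $(-\beta_2,\beta_1)$ around $0$. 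As $c$ varies this set of $2h-1$ points runs over all rigid rotations of the standard orbit $\{0,\alpha,\dots,(2h-2)\alpha\}$, so the optimal choice puts $0$ in a \emph{largest} gap of that orbit, split as evenly as the bound $\rho_h$ permits. Since the $f_\alpha^kB$ are required disjoint, they cover exactly $h(\beta_1+\beta_2)$, and the above gives that the maximal covered measure at height $h$ is $h\cdot\min\!\bigl(G(2h-1),2\rho_h\bigr)$, where $G(N)$ is the largest gap length of an $N$-point orbit of $f_\alpha$; hence $F_2(f_\alpha)=\limsup_{h\to\infty}h\cdot\min\!\bigl(G(2h-1),2\rho_h\bigr)$.

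Next I would feed in the Three Gap Theorem. With $\eta_n:=|q_n\alpha-p_n|$ (so $t_n=\eta_n/\eta_{n-1}$ and $v_n=q_{n-1}/q_n$ in the notation of Theorem~\ref{thm:checkhova}), one has $\rho_h=\eta_n$ for $q_n\le h-1<q_{n+1}$, and $G(N)=\eta_{j-1}+\eta_j$ for $q_j\le N<q_j+q_{j-1}$, while $G(N)=\eta_{j-1}$ for $q_j+q_{j-1}\le N<q_{j+1}$ apart from a short decreasing tail near $q_{j+1}$ (and some harmless exceptions for the smallest $N$). Optimizing over those $h$ with $2h-1$ in a fixed block $[q_j,q_{j+1})$ and letting $j\to\infty$, the first sub-block contributes, via $q_j\eta_{j-1}=(1+t_jv_j)^{-1}$, $\eta_{j-1}+\eta_j=(1+t_j)\eta_{j-1}$ and $q_j+q_{j-1}=(1+v_j)q_j$, the quantity $g(t_j,v_j)$ with $g(t,v):=\tfrac{(1+t)(1+v)}{2(1+tv)}$; and (when $a_{j+1}\ge 2$) the second sub-block contributes $\max(q_j\eta_{j-1},q_{j+1}\eta_j)$, whereas when $a_{j+1}=1$ the second sub-block is empty. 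Since $\limsup_n q_{n+1}\eta_n=F_1(f_\alpha)$ by Theorem~\ref{thm:checkhova}, this yields
$$F_2(f_\alpha)=\max\!\Bigl(\limsup_{j\to\infty}g(t_j,v_j),\ L\Bigr),\qquad L=\begin{cases}F_1(f_\alpha)&\text{if }a_i>1\text{ infinitely often},\\[2pt]0&\text{otherwise}.\end{cases}$$

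For part (i), $a_i=1$ eventually, so in the tail $t_j=1/\varphi$ exactly and $v_j\to1/\varphi$; hence $\limsup_jg(t_j,v_j)=g(1/\varphi,1/\varphi)=\tfrac{4\varphi+3}{10}$, and since $F_1(f_\alpha)=\tfrac1{3-\varphi}=\tfrac{5+\sqrt5}{10}<\tfrac{4\varphi+3}{10}$ we get $F_2(f_\alpha)=\tfrac{4\varphi+3}{10}>F_1(f_\alpha)$. For part (ii), $a_i>1$ eventually, so $L=F_1(f_\alpha)$ and it remains to show $\limsup_jg(t_j,v_j)\le F_1(f_\alpha)$, i.e. $g(t^*,v^*)\le F_1(f_\alpha)$ for every limit point $(t^*,v^*)$ of $(t_j,v_j)$, realized along some $j_k\to\infty$. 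As $a_i\ge2$ eventually we always have $t^*\le\sqrt2-1$. If also $v^*\le\sqrt2-1$, then $(1+t^*)(1+v^*)\le2$, so $g(t^*,v^*)\le(1+t^*v^*)^{-1}$; and $t^*v^*=\lim_kt_{j_k}v_{j_k}\ge\liminf_jt_jv_j$, whence $g(t^*,v^*)\le(1+\liminf_jt_jv_j)^{-1}=F_1(f_\alpha)$. If $v^*>\sqrt2-1$, then for large $k$ necessarily $a_{j_k}=2$ and $v_{j_k-1}<\sqrt2-1$; using $t_{j_k-1}=(2+t_{j_k})^{-1}$ and $v_{j_k}=(2+v_{j_k-1})^{-1}$ one checks $g(t_{j_k},v_{j_k})\le(1+t_{j_k-1}v_{j_k-1})^{-1}$, the inequality being exactly $t_{j_k}+v_{j_k-1}+t_{j_k}v_{j_k-1}\le1$, which holds because both $t_{j_k}$ and $v_{j_k-1}$ are $<\sqrt2-1$ and $2(\sqrt2-1)+(\sqrt2-1)^2=1$; letting $k\to\infty$ again bounds $g(t^*,v^*)$ by $\limsup_m(1+t_mv_m)^{-1}=F_1(f_\alpha)$. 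Hence $F_2(f_\alpha)=F_1(f_\alpha)$.

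I expect the main obstacle to be exactly this last inequality in part (ii): because $g(t_j,v_j)$ can strictly exceed the ``same index'' term $(1+t_jv_j)^{-1}$ for infinitely many $j$ — this already happens for $\alpha=\sqrt2-1$, along the subsequence where $v_j>\sqrt2-1$ — one cannot argue termwise and is forced to compare $g$ against a \emph{shifted} continued-fraction term, which is what produces the case split above. The remaining points — the precise value of $G(N)$ near the block endpoints, and the effect of the integrality of $h$ on the first-sub-block optimization — are routine bookkeeping that does not affect the $\limsup$.
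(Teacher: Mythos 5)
Your reduction of $F_2(f_\alpha)$ to $\limsup_{h\to\infty} h\cdot\min\bigl(G(2h-1),2\rho_h\bigr)$ is correct, and it is in substance the same disjointness analysis that the paper packages into Lemma~\ref{lem:basic_properties_2int3}; your evaluation in case (i) agrees with the paper's value $\frac{2\sqrt5+5}{10}$. (Minor point: on $[q_j+q_{j-1},q_{j+1})$ with $a_{j+1}\ge3$ the largest gap is a decreasing staircase $\norm{q_{j-1}\alpha}-(m-1)\norm{q_j\alpha}$, not ``$\norm{q_{j-1}\alpha}$ except for a short tail''; this happens to be harmless because there the constraint $2\rho_h$ is the binding one, but as stated it is inaccurate.)

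The genuine gap is in case (ii), at the assertion that $a_i\ge2$ eventually forces every limit point to satisfy $t^*\le\sqrt2-1$. This is false: over continued fractions with all partial quotients at least $2$, the supremum of $[0;b_1,b_2,\ldots]$ is $1/2$, not $\sqrt2-1$, because $[0;2,M,\ldots]$ with $M$ large is close to $1/2$ (enlarging $b_2$ makes the tail smaller and the value larger, so the all-twos expansion is not extremal). The same applies to $v_j$. Consequently $(1+t^*)(1+v^*)\le2$ can fail, so your Case-A inequality $g(t^*,v^*)\le(1+t^*v^*)^{-1}$ does not follow; the configuration $t^*>\sqrt2-1$ is never treated; and in Case B the inequality you reduce to, $t_{j_k}+v_{j_k-1}+t_{j_k}v_{j_k-1}\le1$, is justified only by the same false bound and can fail. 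Concretely, let $a_i=2$ for all $i$ except $a_{j+2}=K$ with $K$ huge along a sparse sequence of indices $j$: then $t_j\to 1/2$ and $v_j,v_{j-1}\to\sqrt2-1$, so $(1+t_j)(1+v_j)\approx 2.12>2$ and $t_j+v_{j-1}+t_jv_{j-1}\approx 1.12>1$, i.e.\ neither the same-index comparison nor the comparison with index $j_k-1$ bounds $g(t_j,v_j)$ by a continued-fraction term. The conclusion $F_2=F_1$ still holds for such $\alpha$, but only because the huge quotient makes the term at index $j+1$ close to $1$; that is, the comparison must be made with the shifted index on the side of the large partial quotient, which your two-case split does not provide. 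So part (ii) needs the corrected bound ($t^*,v^*<1/2$ only) and a genuinely two-sided, more refined case analysis before $\limsup_j g(t_j,v_j)\le F_1(f_\alpha)$ is established.
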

 In contrast to $F_1(f_\alpha)$ the covering number by two intervals $F_2(f_\alpha)$ does thus not attain its minimum for $\alpha = \varphi = \frac{1+\sqrt{5}}{2}$, because $F_2(f_\varphi) = \frac{2\sqrt{5}+5}{10}$ and for example for $\beta = 1 + \sqrt{2}$ we have $F_2(f_\beta) = F_1(f_\beta) =  1/(4-2\sqrt{2}) < F_2(f_\varphi)$ by Theorem~\ref{thm:checkhova}. The proof of Theorem~\ref{thm:F2} relies on a more deep going application of the Three Gap Theorem (Theorem~\ref{thm:3gap}) than in \cite{Che00}. The reason why this is possible is that the combinatorics behind the orbits of the endpoints of the base intervals can still be controlled if $n_B = 2$. As the number of base interval $n_B$ increases, the situation gets more and more complicated but we can nonetheless give a lower bound when $F_k(f_\alpha) > F_1(f_\alpha)$ holds for a given $\alpha$.
\begin{theorem} \label{thm:weak_lower_bound} We have
$$F_k(f_\alpha) > F_1(f_\alpha)$$
for all $\alpha \in \mathbb{R} \setminus \mathbb{Q}$ with $\limsup_{i \to \infty} a_i \leq k-1$.
\end{theorem}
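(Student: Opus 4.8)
The plan is to reduce the statement to a comparison between the first covering numbers of two different rotations. The new idea is that $k$ equally spaced base intervals for $f_\alpha$ realise a single base interval for the faster rotation $f_{\alpha/k}$. Since $F_k(f_\alpha)$ depends only on $\alpha\bmod 1$, take $\alpha\in(0,1)$ and set $\gamma=\alpha/k\in(0,1/k)$. For $0<\ell<\gamma$ the set $B=\bigcup_{i=0}^{k-1}[\,i\gamma,\,i\gamma+\ell\,)$ is a disjoint union of $k$ intervals, and because $\alpha=k\gamma$,
$$f_\alpha^{\,j}\big([\,i\gamma,i\gamma+\ell\,)\big)=[\,(i+jk)\gamma,\ (i+jk)\gamma+\ell\,)\pmod 1\qquad(0\le i<k,\ 0\le j<h);$$
as $(i,j)$ ranges over this set, $i+jk$ ranges over $\{0,\dots,kh-1\}$, so $\bigcup_{j=0}^{h-1}f_\alpha^{\,j}B=\bigcup_{l=0}^{kh-1}f_\gamma^{\,l}[\,0,\ell\,)$, and since the $k$ pieces of $B$ are disjoint, the family $\{f_\alpha^{\,j}B\}_{j<h}$ is pairwise disjoint iff $\{f_\gamma^{\,l}[0,\ell)\}_{l<kh}$ is. Hence the $f_\alpha$-tower of height $h$ over $B$ and the single-interval $f_\gamma$-tower of height $kh$ over $[0,\ell)$ cover the same measure.

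Taking for $\ell$ the least gap of $\{l\gamma:0\le l<kh\}$ (which for large $h$ is $<\gamma$, so the condition $\ell<\gamma$ costs nothing), this shows that the optimal $k$-interval $f_\alpha$-tower of height $h$ covers at least as much as the optimal single-interval $f_\gamma$-tower of height $kh$. By the Three Gap Theorem the latter equals $N\min_{1\le m<N}\|m\gamma\|$ with $N=kh$, a quantity non-decreasing on each block $(q^\gamma_{n-1},q^\gamma_n]$ with block-maximum $q^\gamma_n\|q^\gamma_{n-1}\gamma\|$; since the largest multiple of $k$ in such a block is within $k-1$ of $q^\gamma_n$ and $\|q^\gamma_{n-1}\gamma\|\to 0$, restricting to heights divisible by $k$ leaves the $\limsup$ unchanged. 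With Checkhova's formula (Theorem~\ref{thm:checkhova}) this gives the core inequality $F_k(f_\alpha)\ge F_1(f_{\alpha/k})$ (and, placing the intervals at $i(\alpha+m)/k$, also $F_k(f_\alpha)\ge F_1(f_{(\alpha+m)/k})$ for every $m$). It remains to prove: if $\limsup_i a_i(\alpha)\le k-1$ then $F_1(f_{\alpha/k})>F_1(f_\alpha)$.

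This is a statement about continued fractions, treated via $F_1(f_\beta)=\limsup_n(1+t_nv_n)^{-1}$, $t_n=[0;b_{n+1},\dots]$, $v_n=[0;b_n,\dots,b_1]$, $\beta=[0;b_1,b_2,\dots]$. Let $\omega_k=[0;\overline{k-1}]>1/k$. Since $\limsup a_i(\alpha)\le k-1$, for large $n$ one has $[0;a_{n+1},\dots]\ge\omega_k$ and $[0;a_n,\dots,a_1]\ge\omega_k-o(1)$ (the worst case being $a_i\equiv k-1$), so $F_1(f_\alpha)\le(1+\omega_k^2)^{-1}$. Hence it is enough to show $F_1(f_{\alpha/k})>(1+\omega_k^2)^{-1}$, i.e.\ that $\mathrm{CF}(\alpha/k)$ has, at infinitely many scales, partial quotients large enough to force $t_jv_j<\omega_k^2$ (and stay bounded away from it) along a subsequence — for instance two consecutive partial quotients $b_j,b_{j+1}\ge\lceil1/\omega_k\rceil$, giving $t_jv_j<\lceil1/\omega_k\rceil^{-2}<\omega_k^2$. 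The heuristic is that $a_i\le k-1$ makes $q^\alpha_n/q^\alpha_{n-1}=a_n+q^\alpha_{n-2}/q^\alpha_{n-1}<k$, so the approximations $p^\alpha_n/(kq^\alpha_n)$ of $\alpha/k$ have denominators jumping by a factor $<k^2$ while the approximations coming from $j\not\equiv 0\ (\mathrm{mod}\ k)$ are genuinely finer; together these force $\mathrm{CF}(\alpha/k)$ to contain partial quotients of order $\ge k$ — of order $k^2$ in the purely periodic case $\alpha=[0;\overline{k-1}]$ — infinitely often. One makes this precise by transporting the Three Gap / Ostrowski picture of the $\alpha$-orbit to the $\alpha/k$-orbit, or equivalently by renormalising the two-interval exchange that $f_\alpha$ induces on $[0,\gamma)$.

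The genuine obstacle is this last point: controlling $\mathrm{CF}(\alpha/k)$ precisely enough to exhibit those large partial quotients. The hypothesis $\limsup a_i\le k-1$ is essential here — if $\alpha$ already had partial quotients $\ge k$ infinitely often, dividing by $k$ need gain nothing (for $k=2$ one has $F_2(f_\alpha)=F_1(f_\alpha)$ whenever $a_i>1$ eventually, by Theorem~\ref{thm:F2}), so the smallness of the $a_i$ must be used in an essential way. For $k=2$ the relevant expansions are explicit — e.g.\ $(1/\varphi)/2=[0;3,\overline4]$, which recovers $F_1(f_{(1/\varphi)/2})=\tfrac{4\varphi+3}{10}$ and hence the sharp Theorem~\ref{thm:F2} — while for general $k$ the qualitative bound above already yields $F_k(f_\alpha)>F_1(f_\alpha)$.
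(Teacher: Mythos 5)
Your opening reduction is correct and is genuinely different from the paper's argument: with $\gamma=\alpha/k$ and $B=\bigcup_{i=0}^{k-1}[i\gamma,i\gamma+\ell)$, the height-$h$ tower of $f_\alpha$ over $B$ coincides with the height-$kh$ tower of $f_\gamma$ over $[0,\ell)$, your treatment of heights divisible by $k$ is fine, and so $F_k(f_\alpha)\ge F_1(f_{\alpha/k})$; for $k=2$ and golden-type $\alpha$ this even recovers the sharp constant of Theorem~\ref{thm:F2}(i). But the proposal does not reach the theorem from there, for two reasons. First, the fixed threshold you compare both sides to is wrong for $k\ge 3$: partial quotients bounded by $k-1$ do \emph{not} force $t_n\ge\omega_k:=[0;\overline{k-1}]$, since raising an odd-indexed quotient lowers the value while raising an even-indexed one increases it; e.g.\ $[0;\overline{2,1}]=(\sqrt{3}-1)/2\approx0.366<[0;\overline{2}]=\sqrt{2}-1\approx0.414$. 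Worse, by gluing blocks of the form $(1,2)^m(2,1)^m$ one gets $\alpha$ with all $a_i\in\{1,2\}$ for which at infinitely many $n$ both $t_n$ and $v_n$ are near $(\sqrt{3}-1)/2$, so $\liminf_n t_nv_n<\omega_3^2$ and $F_1(f_\alpha)>(1+\omega_3^2)^{-1}$; hence ``the worst case is $a_i\equiv k-1$'' is false, and even a proof that $F_1(f_{\alpha/k})>(1+\omega_k^2)^{-1}$ would not yield $F_1(f_{\alpha/k})>F_1(f_\alpha)$. Second, and as you acknowledge yourself, the key claim that the continued fraction of $\alpha/k$ must exhibit, infinitely often and uniformly over all $\alpha$ with $a_i\le k-1$, partial quotients (or pairs) large enough to push $F_1(f_{\alpha/k})$ strictly above $F_1(f_\alpha)$ is only sketched as a heuristic about denominator growth; in your route this is precisely the hard content of the theorem, and nothing in the proposal proves it.

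For comparison, the paper avoids the continued fraction of $\alpha/k$ entirely by placing the base intervals along the orbit of $f_\alpha$ itself: with $k^*=\limsup_i a_i\le k-1$, it cuts the Kronecker sequence $\{j\alpha\}_{j=1}^{q_{n+1}}$ into $k^*$ blocks of length $\lfloor q_{n+1}/k^*\rfloor$, takes one base interval of width $\norm{q_n\alpha}$ per block with the interval of the last block widened by $\norm{q_{n+1}\alpha}$, and obtains a covered measure of roughly $q_{n+1}\norm{q_n\alpha}+\lfloor q_{n+1}/k^*\rfloor\norm{q_{n+1}\alpha}$; the second summand is bounded below uniformly in $n$ exactly because the $a_i$ are bounded, which gives the strict excess over $F_1(f_\alpha)=\limsup_n q_{n+1}\norm{q_n\alpha}$ directly. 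If you wish to keep your reduction, you must either prove the Diophantine statement about $\alpha/k$ rigorously (and not via the false threshold $(1+\omega_k^2)^{-1}$), or move your $k$ intervals from the arithmetic positions $i\alpha/k$ to positions on the orbit, which in effect reproduces the paper's construction.
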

From Theorem~\ref{thm:F2} and numerical experiments we conjecture that $F_k(f_\alpha) > F_1(f_\alpha)$ actually holds if and only if the partial quotients satisfy $\limsup_{i \to \infty} a_i \leq k-1$. From a quantitative point of view, Theorem~\ref{thm:weak_lower_bound} is nonetheless not very satisfactory because the lower bound therein is constant given $\alpha$. For this reason, we also describe the behaviour of $F_n(f_\alpha)$ as $n \to \infty$. A lower bound in $n$ can even be made explicit if the partial quotients of $\alpha$ are constant. Our proof of this result makes use of the explicit construction of base sets $B$ in the (standard) proof of the Rokhlin Lemma, see Theorem~\ref{thm:rokhlin} and explanations thereafter.
\begin{theorem} \label{thm:bounds:fi} Let $\alpha \in \mathbb{R} \setminus \mathbb{Q}$ have continued fraction expansion $[a_0;a_1;a_2;\ldots]$ and assume that $a_i \leq s$ for all $i \in \mathbb{N}$. Then $F_n(f_\alpha) \to 1$ for $n \to \infty$. Moreover, if $\alpha_i = s$ for all $i \in \mathbb{N}$, then $F_n(f_\alpha)$ can for $n \geq \alpha^{j-1}(\alpha+1)$ be bounded from below by
\begin{align} \label{eq2}
F_n(f_\alpha) \geq F_1(f_\alpha) \frac{1}{\alpha^{j+2}}\left(\lfloor \alpha^{j+1} \rfloor \lfloor \alpha \rfloor + \lfloor \alpha^j \rfloor\right).
\end{align}
\end{theorem}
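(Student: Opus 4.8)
The plan is to build the required base sets $B$ directly from the Rokhlin Lemma construction applied to $f_\alpha$, and then estimate the covering proportion using the Three Gap Theorem. First I would fix a convergent denominator $q_m$ of $\alpha$ and recall that by the Rokhlin construction one obtains a Rokhlin tower of height $h$ over a base consisting of a controlled number of intervals: concretely, partitioning $\mathbb{T}_1$ by the orbit $\{0,\alpha,\dots,(q_m-1)\alpha\}$ into $q_m$ cells, one of which is the "gap'' cell whose return time under $f_\alpha^{q_m}$ forces the Kakutani--Rokhlin decomposition, and then refining to height $h$. The key bookkeeping point is that when $a_i\le s$ for all $i$, the three gaps at level $q_m$ have lengths that are comparable up to a factor depending only on $s$ (this is exactly where Theorem~\ref{thm:checkhova}, i.e.\ the quantity $q_{m+1}|q_m\alpha-p_m|$ and the bounds $v_n,t_n$, enters). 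So the base $B$ one gets from the standard construction, being essentially a union of a bounded number of these cells (bounded in terms of $s$ and the chosen height-to-$q_m$ ratio), has measure that misses only the "error'' cells, and the number of such error cells relative to the total is $O(1/q_m)$, which tends to $0$.

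Second, to get $F_n(f_\alpha)\to 1$: given $n$, I would choose $m$ so that the number of base intervals produced is at most $n$, and then let the tower height $h$ grow; the covered proportion is then $1 - \varepsilon_n$ where $\varepsilon_n\to 0$ as $m\to\infty$, and condition (i) in the definition of $F_n$ (disjointness of $B,f_\alpha B,\dots,f_\alpha^{h-1}B$) is automatic because we built a genuine Rokhlin tower. The only subtlety is that $F_n$ requires the base to have \emph{exactly} $n_B=n$ intervals (or at most $n$, reading the definition charitably — in any case a union of $\le n$ intervals can be regarded as a union of $n$ intervals by splitting one of them), so I would note that artificially subdividing an interval does not change the covered set, hence does not decrease the proportion.

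Third, for the explicit bound \eqref{eq2} in the constant-partial-quotient case $a_i=s$: here the convergents satisfy the linear recursion $q_n = s\,q_{n-1}+q_{n-2}$, so $q_n$ grows like $\alpha^n$ where $\alpha$ (the paper's notation, $=[s;s,s,\dots]$, the relevant quadratic surd) is the dominant root. One computes the three gap lengths at level $q_m$ explicitly: by the Three Gap Theorem they are $|q_m\alpha - p_m|$, $|q_{m-1}\alpha - p_{m-1}| - (a_{m+1}$-dependent multiple$)$, and their sum, and in the constant case these simplify to closed-form expressions in powers of $\alpha$. Choosing $h$ and the number of base intervals $n$ in terms of $\alpha^j$ — specifically taking $n\ge\alpha^{j-1}(\alpha+1)$ base intervals so that one can afford to cover all but a $\lfloor\alpha^{j+1}\rfloor\lfloor\alpha\rfloor+\lfloor\alpha^j\rfloor$ over $\alpha^{j+2}$ fraction of the "long'' gap structure — and multiplying by the leading factor $F_1(f_\alpha) = \lim q_{n+1}|q_n\alpha-p_n|$ that measures the base's own efficiency, one arrives at \eqref{eq2}. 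The floor functions appear precisely because the number of tower levels and the number of base cells are integers obtained by truncating these powers of $\alpha$.

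The main obstacle I expect is the combinatorial control of \emph{how many} intervals the Rokhlin base actually consists of, as a function of the parameters: the naive Kakutani--Rokhlin tower over a rotation has a base that is a single cell of the rank-one partition, but refining to a prescribed large height $h$ that is not a convergent denominator forces the base to break into several subintervals, and counting these exactly (rather than just $O(1)$) is what is needed for the sharp exponents in \eqref{eq2}. I would handle this by always taking $h$ to be a convergent denominator $q_{m'}$ with $m'>m$, so that the tower over the $q_m$-partition has a transparent structure — its base over each of the three gap lengths splits into a number of pieces governed by the partial quotients $a_{m+1},\dots,a_{m'}$, all equal to $s$ in the explicit case — and then optimizing the choice of $m,m'$ (equivalently $j$) against the constraint $n_B\le n$. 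Verifying that this optimization yields exactly the stated floor-function expression, and not merely something asymptotically equivalent, is the one genuinely computational step.
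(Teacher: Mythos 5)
Your overall strategy coincides with the paper's: build the basis from the explicit Rokhlin-lemma construction, take heights equal to convergent denominators, and do the measure bookkeeping with the Three Gap Theorem. However, the quantitative core of your plan is where the theorem actually lives, and there it has a genuine gap and one claim that is false as stated. You assert that the basis is a union of a number of cells ``bounded in terms of $s$'' and that the uncovered proportion is $O(1/q_m)\to 0$. If that were so, some $F_k(f_\alpha)$ with $k=k(s)$ fixed would equal $1$ for bounded partial quotients, which already fails for $k=2$ and constant quotients $s=2$ by Theorem~\ref{thm:F2}(ii) together with Theorem~\ref{thm:checkhova}. The error does not decay in $q_m$; it decays in the \emph{ratio of the two scales involved}, equivalently in the allowed number of base intervals. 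Concretely, the paper takes $A=[0,\varepsilon')$ with $\varepsilon'=\norm{q_{k+j}\alpha}$ and uniform height $q_{k+1}$; the Rokhlin basis then consists of $\lfloor (q_{k+j+1}+q_{k+j})/q_{k+1}\rfloor\le\alpha^{j-1}(\alpha+1)$ intervals of the fine scale, the covered area equals $q_{k+1}\lfloor q_{k+j+1}/q_{k+1}\rfloor\,\norm{q_{k+j}\alpha}+q_{k+1}\lfloor q_{k+j}/q_{k+1}\rfloor\,\norm{q_{k+j+1}\alpha}$ as in \eqref{eq1}, and the identity $q_{k+j+1}\norm{q_{k+j}\alpha}+q_{k+j}\norm{q_{k+j+1}\alpha}=1$ shows this tends to $1$ as $j\to\infty$, the height being made large separately by letting $k\to\infty$ with $j$ fixed; the same display, combined with $q_{k+1}\norm{q_k\alpha}\to F_1(f_\alpha)$ for constant quotients, is exactly what produces \eqref{eq2}. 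This computation, which you defer as ``the one genuinely computational step,'' is not an optimization afterthought — it is the proof.

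In addition, your final paragraph inverts the roles of the two denominators: you take the uniform height to be the \emph{larger} $q_{m'}$ and tie the basis to the coarser $q_m$-partition. That cannot work as written: disjointness of $B,f_\alpha B,\ldots,f_\alpha^{q_{m'}-1}B$ forces every base interval to have length at most $\norm{q_{m'-1}\alpha}$ (Lemma~\ref{lem:basic_properties_2int3}(i)), so the base intervals cannot be pieces of $q_m$-cells of length comparable to $1/q_m$, and your count ``governed by $a_{m+1},\ldots,a_{m'}$'' then no longer attaches to anything in the construction. The workable assignment is the mirror image: the uniform height is the \emph{smaller} denominator $q_{k+1}$, the base intervals live at the finer scale $k+j$ (length $\norm{q_{k+j}\alpha}$), and their number is the ratio of the scales, $\lfloor(q_{k+j+1}+q_{k+j})/q_{k+1}\rfloor$, which is what the hypothesis $n\ge\alpha^{j-1}(\alpha+1)$ in the statement is calibrated to. With that correction and the explicit area computation above, your sketch becomes the paper's proof.
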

Note that assuming $a_i \leq s$ in not a real restriction, because in the unbounded case we have $F_1(f_\alpha) = 1$ by Theorem~\ref{thm:f1} and thus $F_n(f_\alpha) = 1$ for all $n \in \mathbb{N}$.
\begin{remark} \label{rem:checkhova} If the continued fraction expansion of $\alpha \in \mathbb{R}$ has constant partial quotients, i.e. $\alpha = [s;s;s;\ldots]$, then $\alpha =  \frac{s+\sqrt{s^2+4}}{2}$, and
$$F_1(f_\alpha) = \frac{1+\alpha^2}{\alpha^2}$$
follows from Theorem~\ref{thm:checkhova}. This implies that the explicit expression for the lower bound of $F_n(f_\alpha)$ in \eqref{eq2} converges to $1$ for $n \to \infty$.
\end{remark}
\section{Proofs of Main Results} \label{sec:proofs}
\paragraph{Three Gap Theorem.} For $\alpha \in \mathbb{R}$, the \textbf{Kronecker sequence} is defined as $(\left\{n\alpha\right\})_{n \in \mathbb{N}}$, where $\left\{ x \right\} := x - \lfloor x \rfloor$ denotes the fractional part of $x \in \mathbb{R}$. The Kronecker sequence may either be interpreted as a sequence on the unit interval $[0,1)$ or, after gluing its endpoints, on the torus. If the latter viewpoint is applied, the norm of a point $x \in \RR$ defined by $\norm{x} := \min(x-\lfloor x \rfloor, 1-(x-\lfloor x \rfloor))$, is used. It measures the distance of $x \in \mathbb{R}$ from the next integer.\\[12pt] 
Most of the results of this paper are based on the so-called Three Gap Theorem which links the gap structure, i.e. the (Euclidean) distance of neighbouring elements of the Kronecker sequence on the torus, to the continued fraction expansion of $\alpha$. Its first proof goes back to S\'os in \cite{Sos58} but it turns out to be useful for us to formulate it here in terms of the Ostrowski expansion, as it is implicitly done in \cite{PSZ16}, see also \cite{Wei20} for a slightly different version. For that purpose let $\alpha \in \mathbb{R} \setminus \mathbb{Q}$ have continued fraction expansion $[a_0;a_1;a_2;\ldots]$ with convergents $p_n/q_n$. Then for $N \in \mathbb{N}$ the Ostrowski expansion of $N$ is uniquely given by
$$N  = \sum_{n=1}^N b_n q_n$$
with $0 \leq b_n \leq a_n$ and $b_{n-1} = 0$ if $b_n = a_n$ .
\begin{theorem}[Three Gap Theorem] \label{thm:3gap} Let $(\left\{n\alpha\right\})_{n \in \mathbb{N}}$ be the Kronecker sequence of $\alpha \in \mathbb{R} \setminus \mathbb{Q}$. Let $N \in \mathbb{N}$ have Ostrowski expansion
$$N  = \sum_{n=1}^N b_n q_n.$$
Then the gaps that can appear have lengths
\begin{align*}
L_1 & = \norm{\left\{ q_n z \right\}},\\
L_2 & = \norm{\left\{ q_{n-1} z \right\}} - (b_n-1)L_1 - \min(b_{n-1},1)L_1,\\
L_3 & = L_1 + L_2,
\end{align*}
and their multiplicities are
\begin{align*}
N_1 & = N - q_n,\\
N_2 & = (b_{n-1}-1)q_{n-1} + \sum_{n=1}^{N-2} b_n q_n,\\
N_3 & = N - N_1 - N_2.
\end{align*}
\end{theorem}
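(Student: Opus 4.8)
The plan is to prove the stated gap lengths and multiplicities by a renormalisation induction driven by the continued fraction recurrence, using the elementary fact that passing from the first $N-1$ to the first $N$ points of the Kronecker sequence shifts an already placed point by exactly $q_n\alpha$. Throughout I may assume $\alpha \in (0,1)$, since replacing $\alpha$ by $\{\alpha\}$ changes neither the rotation nor its gaps, and I abbreviate $\eta_k := \norm{q_k\alpha} = |q_k\alpha - p_k|$ and $x_k := \{k\alpha\}$ viewed on the torus. First I would assemble the standard best-approximation facts that turn the abstract Three Gap phenomenon into explicit data: the signs of $q_k\alpha - p_k$ alternate with $k$, the $\eta_k$ decrease strictly, they obey $\eta_{k-1} = a_{k+1}\eta_k + \eta_{k+1}$, and $q_k\eta_{k-1} + q_{k-1}\eta_k = 1$ (a restatement of $q_k p_{k-1} - q_{k-1}p_k = \pm 1$). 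These identities are exactly what pin down the three lengths and, through the closing relation $q_k\eta_{k-1} + q_{k-1}\eta_k = 1$, their multiplicities.

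The engine is the congruence $x_N \equiv x_{N-q_n} + q_n\alpha \pmod 1$, valid for $N \ge q_n$. Since $q_n\alpha \equiv \pm\eta_n \pmod 1$ with $\eta_n$ small, the new point $x_N$ lands at signed distance $\eta_n$ from the already present point $x_{N-q_n}$, on the side fixed by the parity of $n$. I would begin with the two-distance base configuration: for $N = q_n$ the point set $\{x_k\}_{k=0}^{q_n-1}$ (which agrees up to rotation, hence in gap structure, with the first $q_n$ Kronecker points) has \emph{exactly two} gap lengths, namely $\eta_{n-1}$ with multiplicity $q_n - q_{n-1}$ and $\eta_{n-1}+\eta_n$ with multiplicity $q_{n-1}$, the two counts being forced by $q_n\eta_{n-1}+q_{n-1}\eta_n = 1$. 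This base statement is itself proved by an outer induction on $n$, i.e. by renormalising the rotation through the recurrence.

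Starting from $N = q_n$, I would then add the points $x_{q_n}, x_{q_n+1}, \dots$ one at a time and maintain the invariant that the gaps always take the three lengths $\eta_n$, a current value $L_2^{(N)}$, and their sum $\eta_n + L_2^{(N)} = L_3^{(N)}$. The congruence forces each new point to fall in one of the current \emph{longest} gaps $L_3^{(N)}$ and to cut off a piece of length precisely $\eta_n = L_1$, replacing a long gap $L_1+L_2$ by the short gap $L_1$ together with the intermediate gap $L_2$. Hence the number of short gaps grows by one per step, giving $N_1 = N - q_n$; a whole \emph{layer} of long gaps is exhausted after $q_{n-1}$ additions (the first layer) and thereafter after $q_n$ additions (each later layer), and at every layer boundary $L_2^{(N)}$ drops by $\eta_n$ while the configuration is momentarily two-distance again. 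Running $a_{n+1}$ layers carries $N$ up to $q_{n+1}$ with longest gap $\eta_n + \eta_{n+1}$, which is the two-distance base configuration for the next convergent, closing the outer induction. The value $L_2 = \eta_{n-1} - (b_n-1)\eta_n - \min(b_{n-1},1)\eta_n$ then records exactly how many layers have been completed, namely $(b_n-1)$ full $q_n$-layers plus the initial $q_{n-1}$-layer precisely when a $q_{n-1}$-term is present (whence the correction $\min(b_{n-1},1)$), and the multiplicities $N_1, N_2, N_3$ are read off from how many long gaps remain unsplit, a count encoded digit by digit by the Ostrowski expansion $N = \sum_k b_k q_k$.

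The main difficulty I anticipate is combinatorial bookkeeping rather than a single hard estimate. One must verify that each new point \emph{never} lands in a short $\eta_n$-gap but always in a longest gap, and must keep track of the side ($+\eta_n$ versus $-\eta_n$) on which the cut is made, an alternation in the parity of $n$ that is the source of the term $\min(b_{n-1},1)\eta_n$. A further nuisance is the degenerate case $a_{n+1}=1$, where $\eta_{n-1}-\eta_n = \eta_{n+1} < \eta_n$ so that the roles of long and short interchange and the lengths must be re-sorted before the next step; similar care is needed at each layer boundary, where the configuration is only two-distance and the invariant must be correctly re-initialised. Once these cases are organised by matching completed layers and position within the current layer to the Ostrowski digits, the closed forms for $L_1, L_2, L_3$ and for $N_1, N_2, N_3$ follow by substituting into $q_n\eta_{n-1}+q_{n-1}\eta_n = 1$.
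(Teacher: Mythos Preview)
The paper does not actually prove Theorem~\ref{thm:3gap}; it is stated as a classical result with references to S\'os, \cite{PSZ16}, and \cite{Wei20}, so there is no formal proof to compare your proposal against. That said, immediately after the statement the paper gives an informal dynamical description of how gaps split as $N$ passes from $q_i$ to $q_{i+1}$ (small gaps become large, each large gap is cut into a small plus a shorter large gap, and so on through $a_{i+1}$ layers), and this is precisely the renormalisation mechanism your proposal makes rigorous. In that sense your approach is not different from the paper's viewpoint---it is the fleshed-out version of the heuristic the paper already relies on.

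Your outline is the standard inductive proof and is sound. The one place where you should be more careful is the claim that each newly inserted point $x_N$ \emph{always} falls in a currently longest gap: the congruence $x_N \equiv x_{N-q_n} \pm \eta_n$ only tells you where $x_N$ sits relative to $x_{N-q_n}$, so you also need to know that $x_{N-q_n}$ borders a long gap on the correct side. This is true, and follows from maintaining (as part of the invariant) not just the multiset of gap lengths but the cyclic order in which gap types occur together with which indices $k$ own which gaps; you acknowledge this under ``combinatorial bookkeeping'' but the proof is not complete until that invariant is stated and preserved. The remaining points you flag---the sign alternation with parity of $n$, the degenerate case $a_{n+1}=1$, and the re-initialisation at two-distance layer boundaries---are exactly the right places to be cautious, and once those are handled the Ostrowski digits do indeed read off $L_2$ and the multiplicities as you describe.
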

In fact, it is necessary to understand the dynamical behaviour of the Kronecker sequence in order to derive information about the covering numbers of unions of $n_B$ disjoint intervals from it. This can be regarded as an extension of the Three Gap Theorem. Our description here is inspired by the presentation in \cite{Che00} and \cite{Wei20}: If $N = q_i$ is increased to $N'=q_i + q_{i-1} \leq q_{i+1}$, then the former small gaps (for $N$) become large (for $N'$) and every former large gap gets split up into a large and a small gap (both for $N'$). If $a_{i+1} > 2$, every large gap for $N' = q_i + q_{i-1}$ gets split up into a small gap (same length as for $N'$) and a gap of length smaller than the large gap length (for $N'$) but greater than the small gap length (for $N'$) until we reach $N'' = 2q_i + q_{i-1}$. This process is repeated until $N''' = (a_{i+1}-1)q_i + q_{i-1}$. Afterwards, every large gap length (for $N'''$) gets split up into a small gap smaller than for $N'''$ and a now large gap for $\tilde{N}$ (which is equal to the small gap length for $N'$), and this process ends when $\tilde{N} = q_{i+1}$ and then starts all over again. In total, a small gap for $N=q_i$ is split up into $a_{i+1}-1$ small gaps for $\tilde{N}=q_{i+1}$ and $1$ large gap for $\tilde{N}=q_{i+1}$ when passing from $q_i$ to $q_{i+1}$. Similarly, a large gap for $N=q_{i}$ is split up into $a_{i+1}$ small gaps for $\tilde{N}=q_{i+1}$ and $1$ large gap for $\tilde{N}=q_{i+1}$.\\[12pt]
\paragraph{Covering Numbers of Rotations.} These observations imply four fundamental but important facts for disjoint orbits of sets of the form described in \eqref{eq:defB} with $n_B = 2$. 
\begin{lemma} \label{lem:basic_properties_2int3} Let $\alpha \in \mathbb{R} \setminus \mathbb{Q}$ with continued fraction expansion $[a_0;a_1;a_2;\ldots]$ and convergents $p_n,q_n$. Let $B$ be a set of the form \eqref{eq:defB} and let $q_n < N \in \mathbb{N} \leq q_{n+1}$ be arbitrary. If $B, f_\alpha B, \ldots f_\alpha^{N-1}B$ are disjoint then 
\begin{itemize}
	\item[(i)] $\beta_i \leq \norm{q_n \alpha}$ holds for all $\beta_i$.
	\item[(ii)] If $n_B = 2$ and $a_{n+1} > 1$, then $B$ can be chosen such that $\beta_1 + \beta_2 = 2\norm{q_n \alpha}$ if and only if $q_n < N \leq \frac{1}{2}q_{n+1}$.
	\item[(iii)] If $n_B = 2$ and $a_{n+1} > 1$ and $\norm{q_n \alpha} < \beta_1 + \beta_2 < 2\norm{q_n \alpha}$, then $\beta_1 + \beta_2 \leq \norm{q_{n-1}\alpha} - (a_{n+1}-1)\norm{q_n\alpha}$ and $q_{n+1} < 2N \leq q_{n+1}+q_n$. If $q_{n+1} < 2N \leq q_{n+1} + q_n$, then there exists a $B$ with $\beta_1+\beta_2 = \norm{q_{n-1}\alpha} - (a_{n+1}-1)\norm{q_n\alpha}$.
	\item[(iv)] If $n_B=2$ and $a_{n+1} = 1$ and $\norm{q_n \alpha} < \beta_1 + \beta_2$, then $\beta_1 + \beta_2 \leq \norm{q_{n+1}\alpha} + \norm{q_{n+2}\alpha} = \norm{q_n\alpha}$ and $q_{n} < N \leq q_{n} + q_{n-1}/2$. If $q_{n} < N \leq q_{n} + q_{n-1}/2$, then there exists a $B$ with $\beta_1+\beta_2 = \norm{q_n\alpha}$.
\end{itemize}
\end{lemma}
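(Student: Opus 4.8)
The plan is to translate the disjointness of the orbit $B, f_\alpha B, \ldots, f_\alpha^{N-1}B$ into a statement about the Kronecker sequence $(\{k\alpha\})_{k=0}^{N-1}$ and then feed it into the Three Gap Theorem. The key observation is the standard one used in \cite{Che00}: if $B$ is a union of intervals as in \eqref{eq:defB}, then $B, f_\alpha B, \ldots, f_\alpha^{N-1}B$ are pairwise disjoint if and only if each interval $B_i = [c_i, c_i+\beta_i)$ together with its translates is disjoint from every translate of every $B_j$; equivalently, for each pair $(i,j)$ and each $0 \le k \le N-1$ with $(i,k)\neq(j,0)$, the gap in the point set $\{c_i\} \cup \{c_j + \ell\alpha : 0 \le \ell \le N-1\}$ (read on the torus) must leave room for $\beta_i$. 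Concretely, $\beta_i$ can be at most the distance from $c_i$ to the nearest point of the orbit of $c_i$ under $f_\alpha^{1},\dots,f_\alpha^{N-1}$ and of the full orbit of $c_j$; since the orbit of a single point is just a rotate of $\{0,\alpha,\dots,(N-1)\alpha\}$, all these distances are governed by the gap lengths $L_1, L_2, L_3$ of Theorem~\ref{thm:3gap} for the index $n$ with $q_n < N \le q_{n+1}$. This immediately gives (i): each $\beta_i$ must fit inside a gap of $\{0,\alpha,\dots,(N-1)\alpha\}$, and since $N > q_n$ the smallest gap has length exactly $\norm{q_n\alpha} = L_1$, while in fact $\beta_i$ must be no larger than the \emph{largest} gap only if it sits alone — but the constraint from the interval containing its own starting point forces $\beta_i \le L_1$. (One must be slightly careful: $\beta_i \le $ largest gap is what one gets crudely, and the sharpening to $L_1$ comes from the fact that $B_i$'s translates $f_\alpha^k B_i$ must be mutually disjoint, i.e. $\beta_i \le \min_{1\le k \le N-1}\norm{k\alpha} = \norm{q_n\alpha}$.)

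For parts (ii)--(iv) I would use the refined dynamical description of how gaps split as $N$ increases from $q_n$ toward $q_{n+1}$, spelled out in the paragraph preceding the lemma. The point is that with $n_B = 2$ one has exactly two intervals, so $\beta_1 + \beta_2$ can be as large as twice the minimal gap length provided the two intervals can be placed in two \emph{different} small gaps, or can be placed so that one interval occupies a small gap and the constraints still allow the second — this is a counting argument on how many gaps of each length are available. For (ii): when $a_{n+1} > 1$, I would show that for $q_n < N \le \tfrac12 q_{n+1}$ the number of small gaps (length $L_1 = \norm{q_n\alpha}$) is at least $2$ and moreover one can position $c_1, c_2$ so that $B_1, B_2$ each fill a small gap with all translates disjoint; the threshold $\tfrac12 q_{n+1}$ is exactly where the small-gap count drops below what two full intervals of length $\norm{q_n\alpha}$ need (each interval of length $\norm{q_n\alpha}$ "uses up" roughly $q_n$ consecutive orbit points, so two of them need $2q_n \lesssim q_{n+1}$, with the sharp constant coming from the multiplicities $N_1, N_2, N_3$ in Theorem~\ref{thm:3gap}). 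For (iii) and (iv) the same bookkeeping is carried out in the intermediate regime $\norm{q_n\alpha} < \beta_1+\beta_2 < 2\norm{q_n\alpha}$: here the available "second slot" for the pair is a gap of length $L_2$ (for (iii), with $a_{n+1}>1$, this is $\norm{q_{n-1}\alpha} - (a_{n+1}-1)\norm{q_n\alpha}$ by the $L_2$ formula with $b_n = a_{n+1}-1$, $b_{n-1}=0$), and the constraint $q_{n+1} < 2N \le q_{n+1}+q_n$ is read off from the multiplicity of $L_1$-gaps, $N_1 = N - q_n$. Part (iv), the case $a_{n+1}=1$, is the degenerate case where $L_2 = L_3$ essentially collapses and one uses the identity $\norm{q_{n+1}\alpha}+\norm{q_{n+2}\alpha} = \norm{q_n\alpha}$ (which follows from $q_{n+2} = a_{n+2}q_{n+1}+q_n$ and the standard relation $\norm{q_{n-1}\alpha} = a_{n+1}\norm{q_n\alpha} + \norm{q_{n+1}\alpha}$ with $a_{n+1}=1$); the threshold $q_n + q_{n-1}/2$ then plays the role that $\tfrac12 q_{n+1}$ did in (ii).

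I expect the main obstacle to be the \emph{converse} (existence) direction in (ii)--(iv): producing an explicit $B$ achieving the claimed $\beta_1+\beta_2$. The inequalities ($\beta_1+\beta_2 \le \ldots$) follow from a clean gap-counting argument, but to construct $B$ one must choose $c_1 = 0$ and $c_2$ so that the interval $B_2 = [c_2, c_2+\beta_2)$ lands in a specific gap of the Kronecker sequence and, simultaneously, that \emph{all} $N$ translates $f_\alpha^k B$ remain disjoint — this requires identifying, via the Ostrowski/Three Gap structure, a gap whose two endpoints are orbit points $k_1\alpha, k_2\alpha$ with $|k_1 - k_2|$ large enough (at least $q_n$) so that the translate structure does not cause overlaps, and verifying that such a gap of the right length exists precisely in the stated range of $N$. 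Managing the combinatorics of "which orbit point is adjacent to which" — i.e. tracking the adjacency structure of the Kronecker sequence, not just the multiset of gap lengths — is the technical heart of the argument, and it is exactly what the extended Three Gap description in the paragraph before the lemma is designed to supply. The case analysis on whether $a_{n+1} = 1$ or $a_{n+1} > 1$ is unavoidable because the gap-splitting pattern genuinely differs, but each case reduces to the same template once the right gap is located.
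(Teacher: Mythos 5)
Your part (i) matches the paper's argument (disjointness of the translates of each single $B_i$ already forces $\beta_i \le \min_{1\le k\le N-1}\norm{k\alpha} = \norm{q_n\alpha}$ for $N > q_n$). But for (ii)--(iv) what you give is a plan, and the step you yourself flag as the main obstacle --- actually producing a $B$ attaining $\beta_1+\beta_2 = 2\norm{q_n\alpha}$ (resp.\ $\norm{q_{n-1}\alpha}-(a_{n+1}-1)\norm{q_n\alpha}$, resp.\ $\norm{q_n\alpha}$) in the stated range of $N$ --- is left unresolved. The paper closes exactly this gap with a short device you do not have: take $c_1 = 0$ and place the second interval at $c_2 = \{N\alpha\}$. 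Then the left endpoints of all $2N$ translates $f_\alpha^k B_1, f_\alpha^k B_2$, $0\le k\le N-1$, form the single Kronecker segment $\{0,\{\alpha\},\ldots,\{(2N-1)\alpha\}\}$, so disjointness of the whole tower is read off from the Three Gap Theorem applied to $2N$ points: when $2N\le q_{n+1}$ (case (ii)) the minimal gap of that segment is $\norm{q_n\alpha}$, so two intervals of length $\norm{q_n\alpha}$ fit, and in cases (iii)/(iv) the same rotation trick works under $2N\le q_{n+1}+q_n$ with the corresponding gap lengths. Without this (or an equivalent explicit adjacency argument) the ``there exists a $B$'' halves of (ii)--(iv) are not established; you correctly identify that tracking which orbit point is adjacent to which is the technical heart, but you do not supply it.

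There is a second, smaller gap on the necessity side of (ii). The crude count you sketch (``two intervals of length $\norm{q_n\alpha}$ use up about $2q_n$ orbit points'') only gives $2N\norm{q_n\alpha}\le 1$, i.e.\ roughly $2N\lesssim q_{n+1}+q_n$, which is weaker than the claimed cutoff $2N\le q_{n+1}$; the distinction between the thresholds in (ii) and (iii) is precisely the content of the lemma. The paper obtains the sharp bound by examining the gap configuration at $N=\lfloor a_{n+1}/2\rfloor q_n$ (blocks of small gaps terminated by a medium or large gap) and counting how many further splittings are possible before no room is left for a second full-length interval, which yields the condition $2j\le q_{n-1}$ and hence $2N\le q_{n+1}$. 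Appealing to the multiplicities $N_1,N_2,N_3$ of Theorem~\ref{thm:3gap} alone, as you propose, does not by itself separate $\tfrac12 q_{n+1}$ from $\tfrac12(q_{n+1}+q_n)$, so this bookkeeping would still have to be carried out to complete your argument.
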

Observe that if $a_{n+1} = 1$, then the situation described in part (ii) cannot occur, because the long gap length is less than twice the small one in this case.
\begin{proof} Assertion (i) is immediately clear by the Three Gap Theorem since the orbits of the intervals $B_i$ have to be disjoint.\\[12pt] 
For the second assertion, the following line of argument leads to the desired result: let us assume first that $a_{n+1} > 2$ is even. If $N = q_{n} + q_{n-1}$ then there are $q_{n-1}$ small gaps of size $\norm{q_n \alpha}$ and $q_n$ large gaps of size $\norm{q_{n-1}\alpha}$. At most $a_{n+1}$ small gaps fit into a large gap. In order to have $\beta_1 = \beta_2 = \norm{q_n \alpha}$, there needs to be an extra small space on the right of each block of small gaps. However, for $N = a_{n+1}/2 \cdot q_n$ the gap structure of the sequence is as follows: it consists solely of blocks with $(a_{n+1}/2-1)$ small gaps followed either by a medium or a large sized gaps. In each of the following $j$ steps one of the large gaps gets, as usual, split up into a medium and a small size gap, i.e. there are $q_{n-1}-j$ large gaps while we get $j$ additional small gaps. Therefore, we must have $2j \leq q_{n-1}$. This condition implies $2N \leq q_{n+1}$ and is necessary for $\beta_1+\beta_2 = 2 \norm{q_n \alpha}$. If $a_{n+1} > 1$ is odd then for $N = \lfloor a_{n+1}/2 \rfloor \cdot q_n$ the gap structure of the sequence is as follows: it consists solely of blocks with $(\lfloor a_{n+1}/2 \rfloor-1)$ small gaps followed either by a medium or a large sized gaps and a similar argument as in the even case can be applied. Now let $N$ satisfy the assumptions of (ii) and choose $\beta_1 = \beta_2 = \norm{q_n \alpha}$ and $c_1 = \norm{N\alpha}$. The orbit of the left endpoints of $B$ thus equals the Kronecker sequence $\alpha,\ldots,N\alpha,(N+1)\alpha,\ldots,2N\alpha$. As $2N \leq q_{n+1}$, the joint sequence has minimal gap length $\norm{q_n \alpha}$ and thus the orbit of $B$ is disjoint. Finally, in the case $a_{n+1} = 2$ and $N = q_n$, there are $q_{n-1}$ large gaps and $q_n-q_{n-1}$ small gaps and the gap sizes satisfy $\norm{q_{n-1}\alpha} < \norm{q_{n-2}\alpha} < 2\norm{q_{n-1}\alpha}$. Therefore $N \leq q_n + q_{n-1}/2 = \frac{1}{2}q_{n+1}$ must hold if $\beta_1 = \beta_2 = \norm{q_n \alpha}$. Again, the upper bound on $\beta_1 + \beta_2$ can be realized by a rotation with angle, i.e. $c_1 = \norm{N\alpha}$\\[12pt]
If $N \leq \frac{1}{2}q_{n+1}$, then we are in the situation of (ii). So we may restrict to the case $N > \frac{1}{2}q_{n+1}$ when we want to prove (iii). The maximal possible value for $\beta_1 + \beta_2$ is indeed $\norm{q_{n-1}\alpha} - (a_{n+1}-1)\norm{q_n\alpha}$ and this is the only possibility because $\norm{q_{n-1}\alpha} - a_{n+1} \norm{q_n\alpha} < \norm{q_n\alpha}$. Note that in the $\lfloor q_{n}/2 \rfloor$ steps between the upper and the lower bound for $N$ every medium and large gap gets diminished by $\norm{q_n \alpha}$. Thus it follows that $\beta_1 + \beta_2 = \norm{q_{n-1}\alpha} - (a_{n+1}-1)\norm{q_n\alpha}$ implies $2N \leq q_{n+1} + q_n$. A rotation with angle, i.e. $c_1=\norm{N\alpha}$, yields the claim as for (ii).\\[12pt]
In order to prove (iv) we only note that $\beta_1+\beta_2$ can be realized by a rotation with angle $c_1 = \norm{N\alpha}$ because $2N \leq 2q_n + q_{n-1} = q_{n+1} + q_n$ and the Three Gap Theorem.  Counting the number of large gaps yields the upper bound on $N$.
\end{proof}
Theorem~\ref{thm:F2} can be deduced by using Lemma~\ref{lem:basic_properties_2int3}.
\begin{proof}[Proof of Theorem~\ref{thm:F2}] Let us at first take a look at the general setting for any set 
$$B = [0,\beta_1) \cup [c_2, c_2 + \beta_2)$$
consisting of two intervals. By Lemma~\ref{lem:basic_properties_2int3}~(iii) and (iv), it follows for $2N > q_n + q_{n-1}$ that 
$$\vol (B \cup f_\alpha B \cup \ldots \cup f_\alpha^{N-1}B) \leq q_{n+1} \norm{q_n \alpha} \leq F_1(f_\alpha).$$ 
If $2N < q_{n+1}$,  Lemma~\ref{lem:basic_properties_2int3}~(ii) yields
\begin{align*}
    \vol (B \cup f_\alpha B \cup \ldots \cup f_\alpha^{N-1}B) = 2 \norm{q_n \alpha} N  < q_{n+1} \norm{q_n \alpha} \leq F_1(f_\alpha).
\end{align*}
The cases which remain to be checked are the maximal values of $N$ in Lemma~\ref{lem:basic_properties_2int3}~(iii) and (iv).\\[12pt]
(i) If $a_n=1$ eventually and $N=q_{n+1} + \lfloor \tfrac{1}{2}q_n \rfloor$, then
\begin{align*}
    \vol (B & \cup f_\alpha B \cup \ldots \cup f_\alpha^{N-1}B) = \norm{q_n \alpha} N \\
    & = (q_{n+1}+\lfloor\tfrac{1}{2}q_n\rfloor) \norm{q_n \alpha} 
\end{align*}
follows by Lemma~\ref{lem:basic_properties_2int3}~(iv). The right hand side of the equality converges to $$\frac{1}{1+\frac{1}{\theta}\frac{1}{\theta}} + \frac{1}{2} \frac{1}{\theta} \frac{1}{1+\frac{1}{\theta}\frac{1}{\theta}} = \frac{4\theta+3}{10} = \frac{2\sqrt{5}+5}{10}.$$
(ii) For $a_{n+1} \geq 3$ and $N=\lfloor \tfrac{1}{2} (q_n+q_{n+1}) \rfloor$, we get
\begin{align*}
    \vol (B & \cup f_\alpha B \cup \ldots \cup f_\alpha^{N-1}B) = (\norm{q_n \alpha}+\norm{q_{n+1}\alpha}) N \\
    & \leq \frac{1}{2}(q_{n+1}+q_n) (\norm{q_n \alpha} + \norm{q_{n+1}\alpha})\\ & \leq \frac{1}{2}F_1(f_\alpha) + \frac{1}{6}F_1(f_\alpha) + \frac{1}{6}(f_\alpha) + \frac{1}{9}F_1(f_\alpha) < F_1(f_\alpha).
\end{align*}
For $a_{n+1} =2$ and $N=\lfloor \tfrac{1}{2} (q_n+q_{n+1}) \rfloor$, Lemma~\ref{lem:basic_properties_2int3}~(iii) implies
\begin{align*}
    \vol (B & \cup f_\alpha B \cup \ldots \cup f_\alpha^{N-1}B) = (\norm{q_n \alpha}+\norm{q_{n+1}\alpha}) N \\
    & = \frac{1}{2} (q_{n+1} + q_n) (\norm{q_n \alpha}+\norm{q_{n+1}\alpha}). \\
    & = \frac{1}{2} q_{n+1} \norm{q_n\alpha} \left(1 + \frac{q_n}{q_{n+1}} + \frac{\norm{q_{n+1}\alpha}}{\norm{q_n\alpha}} + \frac{q_n}{q_{n+1}} \frac{\norm{q_{n+1}\alpha}}{\norm{q_n\alpha}}\right)
\end{align*}
If $a_{n+1} = 2$ eventually, then $q_{n}/q_{n+1} \to \frac{1}{2+\sqrt{2}} = \sqrt{2}-1$ and $\norm{q_{n+1}\alpha}/\norm{q_n\alpha} \to \frac{1}{2+\sqrt{2}}$. If $a_{n+1} = 2$ and $a_n > 2$, then $q_n/q_{n+1} < \frac{1}{2+\sqrt{2}}$. As $(1 +\frac{1}{2+\sqrt{2}})^2 = 2$, we have in any case $\lim_{N \to \infty} \vol(B \cup f_\alpha B \cup \ldots \cup f_\alpha^{N-1}B) \leq F_1(f_\alpha)$. In conclusion, we always get $F_2(f_\alpha) \leq F_1(f_\alpha)$.
\end{proof}
From the observation in the proof of Lemma~\ref{lem:basic_properties_2int3} that the choice $c_1 = N\alpha$ suffices to attain the maximum in the case of two intervals, it is possible to also derive a lower bound on $F_k$ in the case $a_i < k$ for all but finitely many $i\in \mathbb{N}$. This is the content of Theorem~\ref{thm:weak_lower_bound}.
\begin{proof}[Proof of Theorem~\ref{thm:weak_lower_bound}] Let $k^* = \limsup_{i \to \infty} a_i$. By considering the corresponding subsequence we may without loss of generality assume $a_i = k^*$ for all $i \in \mathbb{N}$. Partitioning the complete sequence Kronecker sequence $z = \left\{ j\alpha \right\}_{j=1}^{q_{n+1}}$ into $k^*$ blocks of elements of size $\lfloor q_{n+1}/k^* \rfloor$ yields
$$F_{k^*+1}(f_\alpha) \geq (k^*-1) \lfloor \frac{q_{n+1}}{k^*} \rfloor \norm{q_n \alpha} + \lfloor \frac{q_{n+1}}{k^*} \rfloor (\norm{q_{n}\alpha} + \norm{q_{n+1}\alpha}),$$
where the length $\norm{q_{n}\alpha} + \norm{q_{n+1}\alpha}$ stems from the last block.
\end{proof}
\paragraph{Explicit Rokhlin Towers.} In order to improve the lower bound for the area covered by $B,f_\alpha B,\ldots,f_\alpha^{h-1}B$ for circle rotations by $\alpha \in \mathbb{R} \setminus \mathbb{Q}$ we will make use of the construction used in the standard proof of the Rokhlin lemma, as it is presented e.g. in \cite{EW11}~Lemma 2.45. 
\begin{theorem}[Rokhlin Lemma] \label{thm:rokhlin} Let $(X,T,\mu)$ be an invertible ergodic measure-theoretic dynamical system with non-atomic $\mu$. Then for any $h \geq 1$ and $\varepsilon > 0$, there is a measurable set $B$ such that $B,TB,\ldots,T^{h-1}B$ are disjoint and
$$\mu(B\cup TB \cup \ldots \cup T^{h-1}B) > 1 - \varepsilon.$$
\end{theorem}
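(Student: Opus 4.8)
The plan is to run the classical Kakutani--Rokhlin skyscraper construction and then slice each column into blocks of height $h$. (Note first that $\mu$ non-atomic together with $T$ ergodic already force $T$ to be aperiodic, but the argument below uses only recurrence, ergodicity, invariance of $\mu$, and non-atomicity.) Fix $\varepsilon > 0$ and $h \ge 1$, and use non-atomicity of $\mu$ to choose a measurable set $E$ with $0 < \mu(E) < \varepsilon/h$. By the Poincar\'e recurrence theorem, $\mu$-a.e. point of $E$ has finite first return time $R_E(x) = \min\{n \ge 1 : T^n x \in E\}$; set $E_n := \{x \in E : R_E(x) = n\}$, so $E = \bigsqcup_{n\ge1} E_n$ up to a null set. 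For each fixed $n$ the sets $E_n, TE_n, \dots, T^{n-1}E_n$ are pairwise disjoint, since an overlap would produce a point of $E$ returning to $E$ strictly before time $n$; and the \emph{skyscraper} $\widehat E := \bigsqcup_{n\ge1}\bigsqcup_{k=0}^{n-1} T^k E_n$ is a genuine disjoint union that is conull in $X$: its complement lies in $\bigcap_{j\ge0} T^j(X\setminus E)$, a set which is $T$-invariant modulo null sets and disjoint from $E$, hence has measure $<1$ and is therefore null by ergodicity.

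Next I would define the base of the tower by retaining, over each column of height $n$, only the bottom $\lfloor n/h\rfloor$ blocks of height $h$:
\[ B \ := \ \bigsqcup_{n\ge1}\ \bigsqcup_{j=0}^{\lfloor n/h\rfloor-1} T^{jh}E_n . \]
For $0 \le i \le h-1$ we have $T^i B = \bigsqcup_n \bigsqcup_j T^{jh+i}E_n$, and for fixed $n$ the exponents $jh+i$ with $0\le j<\lfloor n/h\rfloor$ and $0\le i<h$ run over $0,1,\dots,h\lfloor n/h\rfloor-1$ each exactly once, with $h\lfloor n/h\rfloor\le n$; hence $B, TB, \dots, T^{h-1}B$ are pairwise disjoint. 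The part of $\widehat E$ left uncovered by $\bigcup_{i=0}^{h-1}T^iB$ is exactly the top $r_n := n - h\lfloor n/h\rfloor \in\{0,\dots,h-1\}$ levels of each column.

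It then remains to bound $\mu\bigl(X\setminus\bigcup_{i=0}^{h-1}T^iB\bigr) = \sum_{n\ge1} r_n\,\mu(E_n)$. For this I would invoke Kac's lemma: since $T$ is ergodic and $\mu(E)>0$, $\sum_{n\ge1} n\,\mu(E_n) = \int_E R_E\,d\mu = 1$. Using $\lfloor n/h\rfloor > n/h - 1$ together with $\sum_n\mu(E_n) = \mu(E)$ then yields
\[ \sum_{n\ge1} r_n\,\mu(E_n) \ = \ 1 - h\sum_{n\ge1}\Bigl\lfloor\tfrac nh\Bigr\rfloor\mu(E_n) \ < \ 1 - h\Bigl(\tfrac1h-\mu(E)\Bigr) \ = \ h\,\mu(E) \ < \ \varepsilon , \]
so that $\mu(B\cup TB\cup\dots\cup T^{h-1}B) > 1-\varepsilon$, which is the claim.

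The disjointness checks and the slicing of columns are routine bookkeeping. The steps to be careful about are the measure-theoretic inputs: that the forward skyscraper $\widehat E$ is conull (this is where $T$-invariance of $\mu$ together with ergodicity enter, to discard $\bigcap_{j\ge0}T^j(X\setminus E)$), and Kac's identity $\int_E R_E\,d\mu = 1$, which is exactly what turns the height bookkeeping into the bound $h\,\mu(E)$. The remaining hypothesis, non-atomicity, is used precisely once --- to guarantee that $E$ can be chosen with $0 < \mu(E) < \varepsilon/h$.
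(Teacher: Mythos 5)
Your construction is exactly the one the paper sketches (quoting the standard proof from Einsiedler--Ward): your first-return sets $E_n$ are the paper's $A_n$, and your base coincides with the paper's $B=\bigcup_{k}\bigcup_{j=0}^{\lfloor k/h\rfloor-1}T^{jh}A_k$, i.e.\ the bottom blocks of height $h$ in each column of the skyscraper over a set of measure less than $\varepsilon/h$. The details you add beyond the paper's sketch (conullity of the skyscraper via ergodicity, and the Kac-type bound $\sum_n r_n\,\mu(E_n)<h\,\mu(E)$) are correct and simply fill in what the paper delegates to the cited reference.
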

We therefore shortly recall the construction in the proof of Theorem~\ref{thm:rokhlin} at first. Choose a measurable set $A$ with $0<\mu(A)<\varepsilon/h$ and define the sets
\begin{align*}
A_1 &= A \cap T^{-1}A,\\
A_2 & = A \cap T^{-2}A \setminus A_1,\\
& \vdots\\
A_n & = A \cap T^{-n}A \setminus \setminus \bigcup_{i<n} A_i.
\end{align*}
The sets $A_i$ are disjoint and so is the union
$$A_k \cup TA_k \cup \ldots \cup T^{k-1}A_k$$
for all $k \geq 1$. Then the base set $B$ defined by
$$B = \bigcup_{k \geq n} \bigcup_{j=0}^{\lfloor k/n \rfloor -1} T^{jn}A_k$$
satisfies the properties mentioned in Lemma~\ref{thm:rokhlin}.\\[12pt]
Now let $\alpha \in \mathbb{R} \setminus \mathbb{Q}$ and let $p_n,q_n$ be the sequence of the convergents. According to classical continued fraction theory $\norm{q_n \alpha} \to 0$ for $n \to \infty$ holds. For $\varepsilon > 0$ and $n \in \mathbb{N}$ arbitrary, set $\varepsilon' := \varepsilon / n$. In order to apply the following algorithm we furthermore need to impose the condition that $\varepsilon' < 1$ (although this might seem to be a trivial remark, it will indeed have some importance in the remainder of the proof).
Assume that $\norm{q_k\alpha} < \varepsilon' \leq \norm{q_{k-1}\alpha}$. Given $a_{k+1}>1$, the value $\varepsilon'$ lies in one of the following intervals
\begin{align*} & \underbrace{(\norm{(q_k+q_{k-1})\alpha},\norm{q_{k-1}\alpha}]}_{:=I_0}, \quad \underbrace{(\norm{(2q_k+q_{k-1})\alpha},\norm{(q_k+q_{k-1})\alpha}]}_{:=I_1}, \ldots, \\
& \underbrace{(\norm{q_{k}\alpha}),\norm{((a_{k+1}-1)q_k+q_{k-1})\alpha}]}_{:=I_{a_{k+1}}}.
\end{align*}
If $\left\{q_k\alpha\right\} > 0.5$, then $\left\{(iq_k + q_{k-1})\alpha \right\} < 0.5$ for $1 \leq i \leq a_{k+1}$. If $\varepsilon' \in I_{j-1}$, let $l_0(k) := q_k, l_1(k) := jq_k + q_{k-1}$ and $l_2(k) := (j+1)q_k + q_{k-1}$ and $\alpha_{k,0} := \left\{l_0(k)\alpha\right\}, \alpha_{k,1} = \left\{l_1(k)\alpha\right\}$ and $\alpha_{k,2} = \left\{ l_2(k)\alpha \right\}$. Define the sets $A_i$ as in the proof of the Rokhlin Lemma. These intervals are explicitly given by
\begin{align*}
    A_1 & = A \cap f_\alpha^{-1}A = \emptyset\\
     & \vdots\\
    A_{l_0(k)-1} & = A \cap f_\alpha^{-(l_0(k)-1)}A = \emptyset\\
    A_{l_0(k)} & = A \cap f_\alpha^{-l_0(k)}A = [\alpha_{k,0},\varepsilon') \\
    A_{l_0(k)+1} & =  A \cap f_\alpha^{-(l_0(k)+1)}A = \emptyset\\
     & \vdots\\
    A_{l_1(k)-1} & = A \cap f_\alpha^{-(l_1(k)-1)}A = \emptyset\\
    A_{l_1(k)} & = A \cap f_\alpha^{-l_1(k)}A = [0,\varepsilon'-\alpha_{k,1}) \\
    A_{l_1(k)+1} & =  A \cap f_\alpha^{-(l_1(k)+1)}A = \emptyset\\
     & \vdots\\
    A_{l_2(k)-1} & = A \cap f_\alpha^{-(l_2(k)-1)}A = \emptyset\\
    A_{l_2(k)} & = A \cap f_\alpha^{-l_2(k)}A = [\varepsilon'-\alpha_{k,1},\alpha_{k,0})
\end{align*}
and $A_i = \emptyset$ for all $i > l_2(k)$. Note that $A_{l_1(k)} = \emptyset$ might happen (if the left and the right endpoint of the interval are equal). More precisely, we observe that only two intervals occur excatly if $\varepsilon'$ is a right endpoint of one of the $I_j$. For the cases $\left\{q_k\alpha\right\} < 0.5$ and $a_{i+1} = 1$, the calculation of the $A_i$ can be treated similarly. Thus, we may without loss of generality restrict our analysis to the case $\left\{q_k\alpha\right\} > 0.5$, which was described above explicitly.\\[12pt]
If $n=q_{k+1}$ and $\varepsilon' = \norm{q_k\alpha}$, then it can be easily checked that $l_0(k) = q_{k+1}, l_1(k) = q_{k+1} + q_k$ and these are the only non-empty sets $A_i$. Since $\lfloor l_1(k) / n \rfloor = 0$, it follows that $B$ consists of one interval only. Moreover, the Rokhlin tower built by the base $B$ satisfies $\limsup_{n \to \infty} \vol(B \cup f_\alpha B \cup \ldots \cup f_\alpha^{n-1}B) = F_1(f_\alpha)$. Thus, we see that for $\varepsilon = q_{k+1} \norm{q_k\alpha} < 1$, the construction from the Rokhlin lemma yields a basis consisting of one interval only which covers the maximal possible area.\\[12pt]
Writing $\varepsilon := n \varepsilon'$, our idea is now to keep $n$ constant while alternating $\varepsilon'$. Due to our preparatory work the actual proof of Theorem~\ref{thm:bounds:fi} can be kept relatively short now. 
\begin{proof}[Proof of Theorem~\ref{thm:bounds:fi}] Let $\varepsilon' = \norm{q_{k+j}\alpha}$ for some $j \in \mathbb{N}_0$. The intervals $A_{l_0(k+j)}$ and $A_{l_1(k+j)}$ are adjacent by construction. Therefore the basis $B$ consists of $\lfloor l_1(k+j)/n \rfloor = \lfloor (q_{k+j+1} + q_{k+j})/n \rfloor$ distinct intervals. Hence $\lfloor l_1(k+j)/n \rfloor \leq \alpha_s^{j-1}(\alpha_s+1)$ for $j \gg 1$ big enough. The area covered by $B,f_\alpha B,\ldots,f_\alpha^{n-1}B$ is equal to
\begin{align} \label{eq1}
    q_{k+1} \lfloor q_{k+j+1} / q_{k+1} \rfloor \norm{q_{k+j} \alpha} + q_{k+1} \lfloor q_{k+j} / q_{k+1} \rfloor \norm{q_{k+j+1}\alpha}.
\end{align}
From the Three Gap Theorem~\ref{thm:3gap}, it follows that $q_{k+j+1}\norm{q_{k+j}\alpha} + q_{k+j}\norm{q_{k+j+1}\alpha}=1$. Therefore, expression \eqref{eq1} converges to $1$ as $j \to \infty$ which implies $F_n(\alpha) \to 1$ as $n \to \infty$. If $a_i = s$ for all $i \in \mathbb{N}$, then \eqref{eq1} can be
bounded from below by
\begin{align*}
    q_{k+1} \cdot & \left( \lfloor \alpha^{j+1} \rfloor \norm{q_{k+j}\alpha} + \lfloor \alpha^j \rfloor \norm{q_{k+j+1}\alpha} \right) \geq q_{k+1} \cdot \norm{q_{k+j+1}\alpha}\left(\lfloor \alpha^{j+1} \rfloor \lfloor \alpha \rfloor + \lfloor \alpha^j \rfloor\right)\\ & \geq q_{k+1} \cdot \norm{q_k\alpha} \frac{1}{\alpha^{j+2}} \left(\lfloor \alpha^{j+1} \rfloor \lfloor \alpha \rfloor + \lfloor \alpha^j \rfloor\right). 
\end{align*}
Applying $\limsup$ with respect to $k$ implies the desired result.
\end{proof}
\section{Constructive Geometric Definition of Systems of Rank One.} At the end of this paper, we would like to comment on an alternative definition of systems of rank one, namely the constructive geometric one, and thereby round off the presentation of the topic. An advantage of Definition~\ref{defi:rank_one_CG} in comparison to Definition~\ref{defi:rank_one} is that the bullet points in Definition~\ref{defi:rank_one_CG} give an explicit possibility how to define a system of rank one and thereby yield an infinite class of examples. A proof for the equivalence of the two definitions can be found e.g. in \cite{Fer79}, Lemme~16.
\begin{definition} \label{defi:rank_one_CG} A dynamical system $(X,T,\mu)$ is of rank one if there exists a sequence of positive integers $q_n, n \in \NN$ and $a_{n,i}, n \in \NN, 1 \leq i \leq q_n -1$ such that if $h_n$ is defined by
	$$h_0 = 1, \qquad h_{n+1} = q_nh_n + \sum_{i=1}^{n-1}a_{n,i}$$
then
	$$\sum_{n=0}^\infty \frac{h_{n+1}-q_nh_n}{h_{n+1}} < \infty,$$
and subsets $B_n \subset X, n \in \NN, B_{n,i}, n \in \NN, 1 \leq i \leq q_n$ and $C_{n,i,j}, n \in \NN, 1 \leq i \leq q_{n}-1, 1 \leq j \leq a_{n,i}$ such that for all $n$
\begin{itemize}
	\item the $B_{n,i}, 1 \leq i \leq q_n$ form a partition of $F_n$,
	\item the $T^kB_n, 1 \leq k \leq h_{n}-1$ are disjoint,
	\item $T^{h_n}B_{n,i} = C_{n,i,1}$ if $a_{n,i} \neq 0$ and $i < q_n$,
	\item $T^{h_n}B_{n,i} = B_{n,i+1}$ if $a_{n,i} = 0$ and $i < q_n$,
	\item $TC_{n,i,j} = C_{n,i,j+1}$ if $j < a_{n,i}$,
	\item $TC_{n,i,a_{n,i}} = B_{n,i+1}$ if $j < a_{n,i}$,
	\item $B_{n+1} = B_{n,1}$	
\end{itemize}
and the partitions $\left\{ B_n, TB_n, \ldots, T^{h_n-1}B_n, X \setminus \cup_{k=0}^{h_n-1}T^k B_n \right\}$ converge to the Lebesgue $\sigma$-Algebra of $X$.
\end{definition}
We call a rank one map \textbf{(CG) rank one by intervals}, if the sets $B_n$ can be chosen as intervals. A detailed description of the action of $T$ for the case that $X$ is the unit torus can be found e.g. in \cite{GHL12}.\\[12pt] 
If, given a system of rank one in the sense of Definition~\ref{defi:rank_one}, we were able to find the sets and calculate the coefficients in Definition~\ref{defi:rank_one_CG}, one of the main outcomes would be that the star-discrepancy (see \cite{Nie92}) of $T$-orbits could be easily calculated, see \cite{DP99}, Th\'eor\`{e}me 2.6, or \cite{GHL12}, Theorem 86. On the downside of Definition~\ref{defi:rank_one_CG}, this task seems to be very hard also for dynamical systems for which it is comparably easy to prove that they are of rank one in the sense of Definition~\ref{defi:rank_one} (despite that the proof of the equivalence of the two definitions is even partially though not completely constructive, compare \cite{Fer79}, Lemme~16.). At the end of this article, we give a theoretical justification what makes this task particularly hard for rotations. The reason is that the sets $B_n$ can never be chosen as intervals.
\begin{theorem} For $\alpha \in \RR \setminus \QQ$ arbitrary, the rotation map $f_\alpha: x \mapsto x + \alpha$ is not (CG) rank one by intervals. 
\end{theorem}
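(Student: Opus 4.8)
The plan is to argue by contradiction: suppose $f_\alpha$ is (CG) rank one by intervals, so that there exist intervals $B_n$, heights $h_n$ and the associated combinatorial data of Definition~\ref{defi:rank_one_CG} realizing $f_\alpha$. The first observation is that, since each column in the Rokhlin construction is a translate, $f_\alpha^k B_n$ is an interval for every $k$, and the tower $\{f_\alpha^k B_n : 0 \le k \le h_n - 1\}$ together with $X \setminus \bigcup_k f_\alpha^k B_n$ is a partition of the torus into intervals. I would first quantify how many intervals the complement $X \setminus \bigcup_{k=0}^{h_n-1} f_\alpha^k B_n$ consists of: because the $h_n$ translates $f_\alpha^k B_n$ of a single interval $B_n$ partition into at most a bounded number of gaps (in fact the orbit of the two endpoints of $B_n$ under $f_\alpha$ gives, by the Three Gap Theorem applied with the relevant $N \le h_n$, only finitely many gap lengths and the complement is a union of a controlled number of arcs), the uncovered part is a union of few intervals whose total measure is exactly $1 - h_n|B_n|$, which tends to $0$ by the summability condition $\sum_n (h_{n+1} - q_n h_n)/h_{n+1} < \infty$ forcing $h_n |B_n| \to 1$.

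The heart of the argument is then a contradiction with Theorem~\ref{thm:checkhova} (Checkhova's formula) or, more directly, with Theorem~\ref{thm:f1}: if $B_n$ is a single interval and $f_\alpha^0 B_n, \dots, f_\alpha^{h_n-1}B_n$ are disjoint with $h_n|B_n| \to 1$, then $F_1(f_\alpha) = 1$, so $\alpha$ must have unbounded partial quotients. This already handles all $\alpha$ with bounded partial quotients. For $\alpha$ with unbounded partial quotients one cannot derive a contradiction merely from the covering proportion, so here I would instead exploit the finer structure imposed by Definition~\ref{defi:rank_one_CG}: the tower at level $n+1$ is built from $q_n$ stacked copies of the level-$n$ tower (with spacer intervals $C_{n,i,j}$ inserted), and $B_{n+1} = B_{n,1} \subset B_n$ is again an interval. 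The key structural point is that the $q_n$ copies $B_{n,1}, \dots, B_{n,q_n}$ form a partition of $B_n$ into intervals, and the dynamics moves $B_{n,i}$ to $B_{n,i+1}$ after exactly $h_n$ (or $h_n + a_{n,i}$) steps; so $f_\alpha^{h_n}$ restricted to $B_n \setminus B_{n,q_n}$ is a translation that must map the interval partition $\{B_{n,i}\}$ to itself shifted by one. Since $f_\alpha^{h_n}$ is a rotation by $\{h_n\alpha\}$, this forces $B_{n,i+1} = B_{n,i} + \{h_n\alpha\}$, i.e.\ the $B_{n,i}$ are equally spaced consecutive intervals inside $B_n$ — in particular they all have the same length $|B_n|/q_n$ and $\{h_n \alpha\} = \pm |B_n|/q_n$ (or is close to it, accounting for spacers). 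Iterating, $|B_n| = \prod_{m<n} (1/q_m) \cdot |B_0|$ and $\|h_n\alpha\| = |B_n|$, while $h_{n+1} \approx q_n h_n$; this is precisely the relation $\|q_n^{\mathrm{cf}}\alpha\| \sim 1/q_{n+1}^{\mathrm{cf}}$ satisfied by continued-fraction denominators, so the $h_n$ must essentially coincide with the $q^{\mathrm{cf}}_n$ and the whole construction is forced to be (a reindexing of) the standard interval-exchange/Ostrowski tower for $f_\alpha$.

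Once the construction is pinned down to the canonical one, the final contradiction is the known obstruction: in the canonical tower for $f_\alpha$ with $h_n = q^{\mathrm{cf}}_{n}$, the base interval $B_n$ that makes the translates $f_\alpha^0 B_n, \dots, f_\alpha^{h_n-1}B_n$ disjoint with maximal coverage is the Ferenczi interval $B_n = [q_n|q_n\alpha - p_n|,\, 1/q_n - q_n|q_n\alpha - p_n|)$ quoted in the excerpt after Theorem~\ref{thm:f1}, which is well-defined as an interval only when $a_{n+1} \ge 2$ — and more to the point, when $a_{n+1} = 1$ the would-be base set of the tower of height $q_{n+1}$ is genuinely two intervals, not one (this is exactly the mechanism behind $F_1(f_\alpha) < 1$ for bounded partial quotients and behind Theorem~\ref{thm:F2}(i)). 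Since every irrational $\alpha$, bounded or unbounded, has infinitely many indices with $a_{n+1} = 1$ \emph{or} the partial quotients are bounded and one uses Theorem~\ref{thm:f1} directly, in all cases there is no way to realize all the $B_n$ simultaneously as intervals while respecting the nested tower structure; this contradiction completes the proof.

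I expect the main obstacle to be the middle paragraph: rigorously showing that the combinatorial data in Definition~\ref{defi:rank_one_CG} forces the tower to be the canonical Ostrowski one, rather than some exotic alternative with wildly different $h_n$. The delicate points are controlling the spacers $C_{n,i,j}$ (they perturb the clean "shift by one interval" picture and could a priori be intervals of varying lengths), and ruling out the possibility that $h_n$ grows along a sparse subsequence of continued-fraction denominators in a way that hides the $a_{n+1}=1$ obstruction. I would handle the spacers by noting they are themselves intervals forming part of the level-$(n+1)$ partition into intervals, so the same "translate of an interval maps interval-partition to interval-partition" rigidity applies, and handle the growth rate by the measure-theoretic squeeze $\|h_n\alpha\| = |B_n| \to 0$ together with $h_{n+1}/h_n$ bounded in terms of the $q_n$'s, which forces $h_n$ to be a best-approximation denominator up to bounded index shifts.
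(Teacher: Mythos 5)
The decisive step in your argument is the middle one: that the combinatorial data of Definition~\ref{defi:rank_one_CG} forces the sub-intervals $B_{n,i}$ to be equally spaced translates of equal length and hence pins the tower down to the canonical Ostrowski construction. That is exactly where the content of the theorem lies, and it is not proved. In the presence of spacers the passage from $B_{n,i}$ to $B_{n,i+1}$ is $f_\alpha^{h_n+a_{n,i}}$, a rotation by $\{(h_n+a_{n,i})\alpha\}$ which varies with $i$, so the ``rotation shifts the interval partition by one'' rigidity you invoke does not follow as stated; you flag this yourself as the main obstacle, but the proposed fix (``the spacers are intervals too, so the same rigidity applies'') is not an argument. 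Moreover your endgame for the unbounded case rests on a false dichotomy: an irrational with unbounded partial quotients need not have $a_{n+1}=1$ infinitely often (e.g.\ $\alpha=[0;2,3,4,5,\ldots]$), so the cases ``bounded partial quotients'' and ``$a_{n+1}=1$ infinitely often'' do not exhaust all $\alpha$. And even where $a_{n+1}=1$ occurs, the claim that the base ``is genuinely two intervals'' concerns the particular maximal-coverage base; Definition~\ref{defi:rank_one_CG} does not require maximal coverage, and by Theorem~\ref{thm:f1} interval bases with coverage tending to $1$ do exist precisely when the partial quotients are unbounded, so no covering-number argument can produce the contradiction in that regime. (Your first reduction, via $h_n|B_n|\to 1$ and Theorem~\ref{thm:f1}, is fine for the bounded case, but it is the easy half.)

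The paper's proof needs neither the case split nor the identification with the Ostrowski tower. It uses only the single structural fact that the translates $f_\alpha^l(B_2)$ of the interval $B_2=B_{1,1}$ form a partition of the interval $B_1=[0,x)$ into pieces of equal length $x/k$: comparing endpoints of these translates yields a relation of the form $\{n\alpha\}/\{m\alpha\}=k$, which for irrational $\alpha$ forces $n=mk$, and this is incompatible with $x$ lying in the forward orbit of $x/k$. This arithmetic contradiction works uniformly for every irrational $\alpha$, which is why the obstruction to being (CG) rank one by intervals is of a different nature from the covering-number obstruction you tried to leverage. If you want to salvage your approach, the part that must be supplied is a genuine rigidity lemma handling the spacers $C_{n,i,j}$, and the conclusion should then be drawn from the nested partition structure itself rather than from $F_1(f_\alpha)$.
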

\begin{proof} Assume that the claim is false and let $B_1$ be an interval. By rotating (if necessary) we may without loss of generality assume that $B_1 = [0,x)$ for some $x > 0$. By definition $f_\alpha^i(B_2)$ would then be of the form $[0,x/k)$ with $k \in \NN$ for some $i \in \NN$. Since the finite sequence $(f_\alpha^l(B_2))$ needs to build a partition of $B_1$ we must have
	$$\frac{\left\{n \alpha \right\}}{\left\{m \alpha \right\}} = k$$	
for some $m,n \in \mathbb{N}$ which can only hold for $n = mk$. This contradicts the fact that $x$ is in the forward-orbit of $x/k$. Thus, the rotation cannot be (CG) rank one by intervals.
\end{proof}

\paragraph{Acknowledgment.} Research on this paper started in 2020 during the trimester program \textit{Dynamics: Topology and Numbers} at the Hausdorff Research Institute for Mathematics in Bonn whom I would like to thank for hospitality. Moreover I would like to thank S\'ebastien Ferenczi for useful discussions on the topic of this paper.
\bibliographystyle{alpha}
\bibdata{references}
\bibliography{references}

\begin{thebibliography}{GHL12}

\bibitem[BCF99]{BCF99}
V.~Berth\'e, N.~Checkhova, and S.~Ferenczi.
\newblock Covering numbers: Arithmetics and dynamics for rotations and interval
  exchanges.
\newblock {\em Journal d’Analyse Math\'ematique}, 79:1--31, 1999.

\bibitem[BS96]{BS96}
E.~Bach and J.~Shallit.
\newblock {\em Algorithmic Number Theory, Vol. 1: Efficient Algorithms
  (Foundations of Computing)}.
\newblock MIT Press, 1996.

\bibitem[Che00]{Che00}
N.~Chekhova.
\newblock Covering numbers of rotations.
\newblock {\em Theoretical Computer Science}, 230:97--116, 2000.

\bibitem[DP99]{DP99}
M.~Doudekova-Puydebois.
\newblock Contribution \`{a} l'\'{e}tude dynamique de tranlations par
  intervalles.
\newblock {\em Ph.D. thesis, Universit\'{e} de Provence, Marseille}, 1999.

\bibitem[EW11]{EW11}
M.~Einsiedler and T.~Ward.
\newblock {\em Ergodic Theory - with a view towards number theory}.
\newblock Springer, 2011.

\bibitem[Fer79]{Fer79}
S.~Ferenczi.
\newblock Quelques propri\'{e}t\'{e}s des facteurs des sch\'{e}mas de bernoulli
  et des syst\`{e}mes dynamiques en g\'{e}n\'{e}ral.
\newblock {\em Th\`ese de troisi\`eme cycle, Universit\'e Paris 6}, 1979.

\bibitem[Fer97]{Fer97}
S.~Ferenczi.
\newblock Systems of finite rank.
\newblock {\em Colloq. Math.}, 73:35--65, 1997.

\bibitem[GHL12]{GHL12}
P.~Grabner, P.~Hellekalek, and P.~Liardet.
\newblock The dynamical point of view of low-discrepancy sequences.
\newblock {\em Unif. Dist. Theory}, 7 (1):11--70, 2012.

\bibitem[HK02]{HK02}
B.~Hasselblatt and A.~Katok.
\newblock {\em Handbook of Dynamical Systems}.
\newblock Elsevier, 2002.

\bibitem[Kin88]{Kin88}
J.L. King.
\newblock Joining-rank and the structure of finite-rank mixing transformations.
\newblock {\em J. Analyse Math.}, 51:182--227, 1988.

\bibitem[Nie92]{Nie92}
H.~Niederreiter.
\newblock {\em Random Number Generation and Quasi-Monte Carlo Methods}.
\newblock Number 63 in CBMS-NSF Series in Applied Mathematics, SIAM,
  Philadelphia, 1992.

\bibitem[PSZ16]{PSZ16}
G.~Polanco, D.~Schultz, and A.~Zaharescu.
\newblock Continuous distributions arising from the three gap theorem.
\newblock {\em International Journal of Number Theory}, 12 (7):1743--1764,
  2016.

\bibitem[Sos58]{Sos58}
V.~Sos.
\newblock On the distribution mod 1 of the sequence $n\alpha$.
\newblock {\em Ann. Univ. Sci. Budapest, E\"otv\"os Sect. Math.}, 1:127--134,
  1958.

\bibitem[Vee84]{Vee84}
W.~Veech.
\newblock The metric theory of interval exchange transformations i. generic
  spectral properties.
\newblock {\em American Journal of Mathematics}, 106 (6):1331--1359, 1984.

\bibitem[Wei20]{Wei20}
C.~Wei\ss.
\newblock Deducing three gap theorem from rauzy-veech induction.
\newblock {\em Revista Colombiana de Matematicas}, 54:31--37, 2020.

\end{thebibliography}
\end{document}